\newcommand{\Qu}{\mathbb Q}
\newcommand{\Zed}{\mathbb Z}
\newcommand{\Nat}{\mathbb N}
\newcommand{\F}{\mathbb {F}}
\newtheorem{maincond}{Hypothesis}{\bfseries}{\itshape}
\newcommand{\x}{{\mathbf{x}}}
\newcommand{\bk}{{\mathbf{k}}}
\begin{document}

\pagestyle{empty}

\mainmatter

\title{Reachability problems for PAMs \thanks{This research is supported by EPSRC grant
 “Reachability problems for words,
matrices and maps” 
(EP/M00077X/1)}}

\titlerunning{Lecture Notes in Computer Science}

\author{Oleksiy Kurganskyy\inst{1}
\and Igor Potapov\inst{2}
}

\authorrunning{Oleksiy Kurganskyy and Igor Potapov}

\institute{
Institute of Applied Mathematics and Mechanics, 
NAS of Ukraine 
\and
Department of Computer Science,
University of Liverpool, UK 
}
\maketitle
\begin{abstract}
Piecewise affine maps (PAMs) are frequently used as a reference model 
to show the openness of the reachability questions in other systems.
The reachability problem for one-dimentional PAM  is still open even if we define it with only two intervals. 
As the main contribution of this paper we introduce new techniques 
for solving reachability problems based on $p$-adic norms and weights as well as showing 
decidability for two classes of maps. Then we show the connections
between  topological properties for PAM's orbits, reachability problems and 
representation of numbers in a rational base system. Finally we show  a particular instance where
the uniform distribution of the original orbit may not remain uniform or even dense
after making regular shifts and taking a fractional part in that sequence.\\

{\bf Keywords:}  {\it Reachability problems, piecewise affine maps (PAMs), $\beta$-expansion, $p$-adic analysis}
\end{abstract}          
\section{Introduction}
The simplification of real programs shows that there is a number of quite basic models/fragments 
for which we have fundamental difficulties in the design of verification tools. One of them
is the model of iterative map that appears in many different contexts, including
discrete-event/discrete-time/hybrid systems, qualitative biological models,  
chaos-based cryptography, etc \cite{Aswani2007,BB04,BT00,SODA2015}.

The one-dimensional affine piecewise iterative map is a very rich mathematical object and at the
same time one of the simplest dynamical system producing very complex and sensitive effects. 
A function $f$: $ \mathbb{Q} \rightarrow \mathbb{Q}$ is a one-dimensional piecewise-affine map (PAM) if $f$ is of the form 
$f(x)=a_i  x+b_i$ for $x \in X_i$ where all coefficients $a_i$,$b_i$  and the extremities of a finite number of 
 bounded intervals $X_i$ are rational numbers.
Let us consider the sequence of iterations starting from a rational point $x$: $x$, $f(x)$, $f^2(x)=f(f(x))$, and so on. 
The reachability in PAM is a problem to decide  for a given $f$ and two rational points $x$ and $y$
whether $y$ is reachable from $x$. In other words, is there an $n\in \mathbb{N}$ such that $f^n(x)=y$?

The decidability of the reachability problem for one dimensional piecewise-affine map 
is still an open problem, which is related to other challenging questions in the
theory of computation, number theory and linear algebra \cite{Ben-Amram,KCG94,K2001}.
This model plays a crucial role in the recent research about verification of hybrid systems \cite{Asarin,AS02}, timed automata \cite{Asarin} control systems \cite{BBKPT01,BBKT01}, representation of numbers in a rational base ($\beta$-expansions) \cite{Renyi,Sidorov}, discounted sum automata \cite{TDSP2015}.
In particular PAM is often used as a reference model to show the openness of the reachability questions in other systems.
It also has a very natural geometrical interpretation as pseudo-billiard system \cite{PAMS2008}
and Hierarchical Piecewise Constant Derivative (HPCD) system  \cite{AS02}.
The reachability problem for one-dimensional PAM  is still open even if we define it with only two intervals  \cite{Asarin,AS02,RP14_Bell,RP14_Bournez}. 
%
%

The primary goal of this paper is to demonstrate new approaches for solving reachability problem in PAMs, connecting reachability
questions  with topological properties of maps 
and widening connections with other important theoretical computer science problems.
First, we show new techniques for decidability of the reachability problem in PAMs based on
$p$-adic norms and weights. We illustrate these techniques showing decidability of two classes of PAMs.
The algorithm in Theorem~\ref{th1} solves point to point reachability problem
for two-interval injective PAM under the assumption that a PAM has 
bounded invariant densities. While our numerical experiments shows that the sequence of 
invariant densities converge to smooth functions it is not yet clear 
whether it holds for all PAMs or if not whether this property can be algorithmically checked.

Following the proposed approach based on $p$-adic weights in 
Theorem~\ref{th2} we define another fragment of PAMs for which the reachability problem is decidable.
In particularly we remove the condition on bounded invariant densities and injectivity of piecewise-affine map 
and consider a PAM $f$ with a constraint on linear coefficients in affine maps.
This class of PAMs is also related to encoding of rational numbers 
in the rational base ($\beta$-expansions). The decidability of the point-to-point problem for this 
class is shown in Theorem~\ref{th2}  and decidability of point-to-set problem 
for the same class can give someone an answer to the open problem related to $\beta$-expansions.
 
Then we establish the connections of topological properties for PAM's orbits with
reachability problems and representation of numbers in a rational base system.
We show that the reachability problems for above objects tightly connected to 
questions about distribution of the fractional parts in the generated sequences
and moreover about distribution of the fractional part after regular shifts.


\section{Preliminaries and Notations}

In what follows we use traditional denotations $ \mathbb {N}$, $ \mathbb {Z} $,
$ \mathbb {Z} ^ + = \{0,1,2, \ldots \} $,  $ \mathbb {P} $, $ \mathbb {Q} $ and $ \mathbb {R }$ 
for sets of natural, integers, positive integers, primes, rational and real numbers, respectively.
Let us denote by  $S^1=\mathbb{Q}/\mathbb{Z}$ the unit circle which consists only rational numbers.
By $\{x\}$, $\left\lfloor x\right\rfloor$ and $\left\lceil x\right\rceil$
we denote the fractional part
\footnote{It will be clear from the context if brackets are used in other conventional ways, for example, to indicate a set of numbers.}
 of a number, floor and ceiling functions.  
%

Let $Y$ be a set of numbers  and $x$ is a single number, then we define their addition and multiplication as follows:
 $Y+x=x+Y=\{x+y|y\in Y\}$ and $xY=Yx=\{xy|y\in Y\}$. The application of a 
function $f:X\rightarrow Y$ to a set  $X'\subseteq X$ is defined as  $f(X')=\{f(x)|x\in X'\}$.
If $f\subseteq X\times Y$ is a nondeterministic map, i.e.
$f:X\rightarrow 2^Y$ and $x\in X$, $X'\subseteq X$,  we define $f(x)=\{y|(x,y)\in f\}$ and $f(X')=\bigcup_{x\in X'}{f(x)}$.

{\bf $p$-adic norms and weights:}
Let us consider an arbitrary finite set of prime numbers
$\mathbb {F} = \{p_1, p_2, \ldots, p_k \} \subset \mathbb {P}$ 
in ascending order and define the product of prime numbers from $\mathbb {F}$ by  
$m = p_1 p_2 \ldots p_k .$
Let $x$ be a positive rational number that can be represented by primes from a set $\mathbb {F}$. Then its prime factorization is
 $x = \prod_ {p \in \mathbb {F}} {p ^ {\alpha_p}} $ , where $\alpha_p \in \mathbb {Z} $, $p \in \mathbb {F}$.

Any nonzero rational number $x$ can be represented by  $x=(p^{\alpha_p}r)/s$,  
where $p$ is a prime number, $r$ and $s$ are integers not divisible by $p$, and $\alpha_p$ 
is a unique integer. The $p$-adic norm of $x$ is then defined by 
$|x|_p=p^{(-\alpha_p)}$.  
%
%
The $p$-adic weight of $x$  is defined as  $\left \| x \right \|_p = \log_p (| x | _p) $, i.e. $\left \| x \right \|_p = - {\alpha}_p$. 
The following properties of  $p$-adic weights are directly follows from the properties of $p$-adic norm:
\begin{equation}
\|x\|_p=\|y\|_p\Rightarrow\|x+y\|_p\le\|x\|_p.
\end{equation}
\begin{equation}
\|x\|_p<\|y\|_p\Rightarrow\|x+y\|_p=\|y\|_p,
\end{equation}
\begin{equation}
\|x\cdot y\|_p=\|x\|_p+\|y\|_p,	
\end{equation}
\begin{equation}
\|x^r\|_p=r\|x\|_p,	
\end{equation}
If there is a prime $p \notin \mathbb {F}$ such that $\left \| x \right \|_p> 0$, then we define $\left \| x \right \|_m = + \infty$, 
otherwise  $ \left \| x \right \|_m = \max \limits_ {p \in F} {\left \| x \right \|_p}$.

By {\sl $m$-weight} and { \sl $m$-vector-weight} of $x$ in respect to a set $\mathbb {F}$ we denote 
$\left \| x \right \|_m$ and $ (\left \| x \right \|)_m$=$(\left \| x \right \|_{p_1}, \left \| x \right \|_{p_2}, \ldots, \left \| x \right \|_{p_k})^T$ respectively.
Informally speaking the $m$-weight of a number $x$ (if $\left \| x \right \|_m> 0$) is the number of digits after the decimal point in the representation of $x$ 
in base $m$, i.e. $x$ can be written as $x = y \cdot m^ {- \| x \|_m}$, where $y$ is an integer
which the last digit in the $m$-ary representation is non zero.
If $\left \| x \right \|_m \le 0$, then $x$  is an integer number.

Without loss of generality let us consider from now on only such $x \in X$ for which $\| x \|_m <+ \infty $. 
Alternatively if $ \| x \|_m = + \infty$, it is enough to change the set $ \mathbb{F} = \{p_1, \ldots , p_k \} $
in order to fulfill the requirements of $ \| x \|_m <+ \infty$.

\begin{lemma}\label{lemma0}
For all  rational $ x \in [0,1]$ with an upper bound $a\in \mathbb {Z}^+$ on $ \left \| x \right \|_m$, i.e. $ \left \| x \right \|_m <a $,  there is a lower bound $b \in \mathbb{Z}$ based on $a$ and $\mathbb{F}$
such that   $\left\|x\right\|_p\ge b$ for all $p\in\mathbb{P}$. 
\end{lemma}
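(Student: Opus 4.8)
The plan is to convert the hypothesis $\|x\|_m<a$ into an explicit upper bound on the denominator of $x$, then use $x\in[0,1]$ to bound the numerator as well, and finally extract from the bound on the numerator a single lower bound valid for all $p$-adic weights at once.

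Write $x=u/v$ in lowest terms with $u,v\in\mathbb{Z}^+$ and $v\ge 1$. The cases $x=0$ (vacuous) and $a=0$ (then $\|x\|_m<0$ forces $x$ to be an integer, so $x\in\{0,1\}$ and every $\|x\|_p=0$) are trivial, so assume $0<x\le 1$ and $a\ge 1$, hence $0<u\le v$. Since $\|x\|_m<a<+\infty$, no prime outside $\mathbb{F}$ divides $v$, so $v=\prod_{p\in\mathbb{F}}p^{e_p}$, where $e_p=\max(0,\|x\|_p)$ is exactly the power of $p$ occurring in $v$. The hypothesis gives $\|x\|_p\le\|x\|_m<a$ for each $p\in\mathbb{F}$, and since $p$-adic weights are integers, $e_p\le a-1$. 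Multiplying over $\mathbb{F}$ yields $v\le\prod_{p\in\mathbb{F}}p^{a-1}=m^{a-1}$, and therefore $u\le v\le m^{a-1}$ because $x\le 1$.

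Now fix an arbitrary prime $p$ and let $\alpha_p$ be the exponent of $p$ in the prime factorization of $x$, so $\|x\|_p=-\alpha_p$. If $\ell$ denotes the power of $p$ dividing the integer $u$, then $\alpha_p\le\ell$ (the denominator contributes only non-positively to $\alpha_p$), while $p^{\ell}\mid u$ together with $p\ge 2$ and $u\le m^{a-1}$ forces $2^{\ell}\le m^{a-1}$, i.e.\ $\ell\le(a-1)\log_2 m$. Hence $\|x\|_p=-\alpha_p\ge -(a-1)\log_2 m$ simultaneously for every $p\in\mathbb{P}$, and it suffices to take $b=\lfloor -(a-1)\log_2 m\rfloor\in\mathbb{Z}$, a quantity depending only on $a$ and $\mathbb{F}$ (through $m$).

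The individual estimates are routine; the content is concentrated in the chain ``bounded $m$-weight $\Rightarrow$ bounded denominator $\Rightarrow$ (via $x\le 1$) bounded numerator'', after which the smallest prime $2$ turns the size bound $u\le m^{a-1}$ into the uniform ceiling $(a-1)\log_2 m$ on every prime power that can appear in $x$, which is precisely what yields one lower bound $b$ good for all primes. The bound is visibly far from optimal, and any integer $\le -(a-1)\log_2 m$ works just as well.
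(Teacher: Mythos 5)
Your proof is correct and follows essentially the same route as the paper's: bound the denominator by a power of the primes in $\mathbb{F}$ (you get $m^{a-1}$, the paper uses $p_k^{ka}$), use $x\le 1$ to bound the numerator, and use the smallest prime $2$ to cap the exponent of every prime in the numerator. The only differences are cosmetic --- you work with $u/v$ in lowest terms where the paper sums the weights into the quantities $\alpha$ and $\beta$ and writes the single inequality $2^{\alpha}/p_k^{ka}\le x\le 1$, and your final constant $-(a-1)\log_2 m$ is marginally sharper (and more carefully stated as a function of $a$ and $\mathbb{F}$ alone) than the paper's $-ka\log_2 p_k$.
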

\begin{proof}
Let us denote two sums of weights:
$\alpha=-\sum_{p\in\mathbb{P},\|x\|_p\le 0}\|x\|_p$
and
$\beta=\sum_{p\in\mathbb{F},\|x\|_p\ge 1}\|x\|_p$.
Assuming that $2$ is the smallest possible prime number and $p_k$ is the largest number in $\mathbb{F} $, we have the following inequality: 
$\frac{2^{\alpha}}{p_k^{ka}}\le\frac{2^{\alpha}}{p_k^{\beta}}\le x \le 1$.
Then $-\alpha\ge -ka{\log_2p_k}$ 
and if $b=-\alpha$ we have $\left\|x\right\|_p\ge b$ for all $p\in\mathbb{P}$. \qed 
\end{proof}

\begin{corollary}\label{cor1}
For any $a\in\mathbb{Z}$ there is only a finite number of rational $x\in [0,1]$ for which $\left\|x\right\|_m<a$.
\end{corollary}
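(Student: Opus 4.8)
The plan is to show that the hypothesis $\|x\|_m<a$ forces the denominator of $x$ (written in lowest terms) to divide one fixed integer depending only on $a$ and $\mathbb{F}$; once this is known, the restriction $x\in[0,1]$ bounds the numerator as well, so only finitely many values of $x$ remain.

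First I would write $x=r/s$ in lowest terms and unpack the condition $\|x\|_m<a$. Since $\|x\|_m<a<+\infty$, the definition of $\|\cdot\|_m$ forbids any prime $p\notin\mathbb{F}$ with $\|x\|_p>0$, so $s$ is a product of primes from $\mathbb{F}$ alone. For each $p\in\mathbb{F}$ we have $\|x\|_p\le\|x\|_m<a$, and since $p$-adic weights are integers and $a\in\mathbb{Z}$ this gives $\|x\|_p\le a-1$; as the $p$-adic valuation of $s$ equals $\max(0,\|x\|_p)$, it is at most $\max(0,a-1)$. Multiplying over all $p\in\mathbb{F}$ we obtain $s\mid M$, where $M:=m^{\max(0,a-1)}$ depends only on $a$ and $\mathbb{F}$. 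This step may equivalently be read off from Lemma~\ref{lemma0}, which, together with the upper bounds just noted, confines every prime exponent of $x$ to a fixed finite interval.

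Then, since $x\in[0,1]$ and $s\mid M$, the number $x$ can be written with denominator $M$: indeed $x=j/M$ with $j=rM/s$ an integer satisfying $0\le j\le M$. Hence every rational $x\in[0,1]$ with $\|x\|_m<a$ lies in the finite set $\{\,j/M : 0\le j\le M\,\}$, which is exactly the claim. Observe that the formula for $M$ also absorbs the degenerate range $a\le 0$: there $M=1$, so the only candidates are $x=0$ and $x=1$.

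The argument is essentially routine, so I do not expect a genuine obstacle; the one point that needs care is that the bound $M$ be chosen uniformly in $x$, and this uniformity is precisely what the integrality of the $p$-adic weights — equivalently, Lemma~\ref{lemma0} — provides.
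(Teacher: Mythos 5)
Your proof is correct: the hypothesis $\|x\|_m<a$ forces the reduced denominator of $x$ to divide $M=m^{\max(0,a-1)}$, and $x\in[0,1]$ then leaves only the finitely many candidates $j/M$ with $0\le j\le M$. The paper states the corollary without an explicit proof, and your direct denominator-bounding argument is exactly the routine deduction it intends from the surrounding weight bounds, so there is nothing further to flag.
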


{\bf Reachability problem for PAMs:}
We say that $f: S^1 \to S^1$  is a one-dimensional piecewise affine map (PAM) whenever $f$ is of the form
$f \left(x \right) = a_ix + b_i$, where  $a_i, b_i \in \mathbb{Q} 
\Leftrightarrow x \in X_i $,  $S^1 = X_1 \cup X_2 \cup \ldots \cup X_l$ and 
where $\{X_1,\ldots,X_l\}$ is a finite family of disjoint (rational) intervals. 
If the intervals are not disjoint we call it {\sl non-deterministic piecewise affine map} and by default
a piecewise-affine mapping is understood to be deterministic.
%
The derivative $f'$ of a PAM $f$ we define as $f'(x)=a_i$ for $x\in X_i$, $1\le i\le l$.

If $f$ is deterministic PAM, an orbit (trajectory) of a point $x$ is denoted by 
$O_f(x)$ and will be  understood either as a set $O_f(x)=\{f^i(x)|i\in\mathbb{Z^+}\}$
or as a sequence, i.e.  $O_f(x):\mathbb{Z^+}\rightarrow S^1$, $O_f(x)(i)=f^i(x)$.
We also define that a point $y$ is reachable from $x$ if $y\in O_f(x)$.

In general the reachability problem for PAM can be defined as follows. 
Given a PAM $f$,  $x\in S^1$ and $Y\subseteq S^1$, decide whether the intersection $Y\cap O_f(x)$ is empty.
If $Y$ is a finite union of intervals, we name the reachability problem  as {\sl point-to-set (interval)} problem. 
If $Y$ is a one element set (i.e. a single point), the reachability problem is known as {\sl point-to-point} reachability.
\footnote{Also in a similar way it is possible to define set-to-point and set-to-set reachability problems.}

In this paper we only consider one-dimensional PAMs and by
 the {\sl reachability problem for PAM} we understand point-to-point reachability 
and explicitly state the type of the problem when we need to refer to other reachability questions.
Note that the point-to-interval reachability can be reduced to point-to-point reachability problem by
extending a map with a few intervals in which the current value is just sequentially deleted. 
It works for all PAMs but may not preserve the properties and the form of the original map.

Generally speaking the piecewise-affine mapping does not need to be defined as $f: X \rightarrow X $, where $X = S^1 = [0,1)$. However if the set $ X $ is a union of any finite number of bounded intervals we always can scale it to $S^1$.
If $f:X\rightarrow X$ is such that $X\neq [0,1)$, and $X\subseteq [a,b)$, then by applying 
conjugation $h(x)=\frac{x-a}{b-a}$ the original reachability problem for $f$ is reduced to 
the reachability problem for the mapping $g=h\circ f\circ h^{-1}$ from  $[0,1)$ to $[0,1)$.
Moreover the interest to PAMs as $f:S^{1}\rightarrow S^{1}$ is also motivated by their use 
in the research of chaotic systems.

\section{Decidability using $p$-adic norms}

It is well know in dynamical systems research that due to complexity of orbits in iterative maps
it is less useful, and perhaps misleading, to compute  the orbit of a single point and
it is more reasonable to approximate the statistics of the underlying dynamics
\cite{Nonlinearity2000,IEEE2002}.
This information is encoded in the so-called {\sl invariant measures}, which specify the probability to observe 
a typical trajectory within a certain region of state space and their corresponding {\sl invariant densities}.

Let us consider a density as an ensemble of initial starting points (i.e. initial conditions). The action of the
dynamical system on this ensemble is described by the Perron-Frobenius operator. 
The ensembles which are fixed under the linear Perron-Frobenius operator 
 is known as 
invariant densities or in other words, they are eigenfunctions with eigenvalue 1 \cite{Nonlinearity2000}.

Formally under an ensemble $A$ we understand an enumerated set (sequence) of points in phase space.
With ensemble we can associate the distribution function and the density function.
Let $I$ be a set of points.  We denote by $F^{{A}}_I(n)=\left| \{i\in\mathbb{Z^+}|i\le n, A(i)\in I\}\right|$ the number of elements in the sequence $A$ which belong to the set $I$ and which indexes are less or equal $n$.
The distribution function of the ensemble $A$ is defined as 
$\Phi_{A}(x)=\lim_{n\rightarrow\infty}\frac{F^{A}_{(-\infty,x)}(n)}{n}$,  if the limits exist. The density function $\phi_A$ of the ensemble $A$ is defined as $\phi_A(x)=\Phi'_A(x)$.

Suppose
given an ensemble $A_0$ with density $\phi_0$. If we apply PAM $f$ to each point of the ensemble, 
we get a new ensemble $A_1$ with some density distribution $\phi_1$. 
We say that the function $\phi_1$ is obtained from $\phi_0$ using 
the Frobenius-Perron or transfer operator, which we denote by $ L_f $.
It is known that $$\phi_1 (x) = L_f (\phi_0) (x) = \sum_ {y \in f ^ {- 1 } (x)} {\frac {\phi_0 (y)} {\left | f '(y) \right |}}.$$
If $\phi_1=\phi_0$ we say that $\phi_0$ is a $f$-invariant density function or an eigenfunction of the transfer operator $L_f$.
 


We prove that if for an injective PAM $f$ there exists an invariant bounded density function then the reachability problem for $f$ is decidable.

\begin{lemma}\label{lemma1}
Let $f$ be an injective PAM, 
and
$\phi$ be a $f$-invariant density function.
If there are $K_{min}>0$ and $K_{max}<+\infty$ such that for
any $x$ from the domain of $f$ the following inequality holds:
$K_{min}<\phi(x)<K_{max}$, then for an arbitrary segment of the orbit
$x_1,x_2,\ldots x_{n+1}$, where $x_{i+1}=f(x_i)$,
we have 
$\frac{K_{min}}{K_{max}}\le|c_1\cdot c_2\cdot\ldots\cdot c_n|\le \frac{K_{max}}{K_{min}}$, where $c_i=a_j$ if $x_i\in X_j$.
\end{lemma}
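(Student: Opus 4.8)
The plan is to exploit the invariance of $\phi$ under the transfer operator $L_f$ together with the injectivity of $f$ to obtain a pointwise recurrence relating consecutive values of $\phi$ along the orbit, and then to telescope.

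First I would observe that, since $f$ is injective, for every point $x_{i+1}=f(x_i)$ of the orbit segment the preimage set is exactly $f^{-1}(x_{i+1})=\{x_i\}$: it contains $x_i$ because $x_{i+1}=f(x_i)$, and by injectivity it cannot contain any other point. Writing $x_i\in X_j$ so that $f'(x_i)=a_j=c_i$, the invariance equation $\phi=L_f(\phi)$ evaluated at $x_{i+1}$ then reduces from a sum over $f^{-1}(x_{i+1})$ to a single term, giving
\[
\phi(x_{i+1})=\frac{\phi(x_i)}{|f'(x_i)|}=\frac{\phi(x_i)}{|c_i|}.
\]
All the quantities $\phi(x_i)$ are strictly positive (indeed bounded below by $K_{min}>0$), so this identity may be rewritten as $|c_i|=\phi(x_i)/\phi(x_{i+1})$ without any division-by-zero concern.

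Next I would multiply these identities for $i=1,2,\ldots,n$, so that the right-hand side telescopes:
\[
|c_1\cdot c_2\cdot\ldots\cdot c_n|=\prod_{i=1}^{n}\frac{\phi(x_i)}{\phi(x_{i+1})}=\frac{\phi(x_1)}{\phi(x_{n+1})}.
\]
Finally, applying the hypothesis $K_{min}<\phi(x)<K_{max}$ to $x=x_1$ in the numerator and to $x=x_{n+1}$ in the denominator yields the upper estimate $|c_1\cdot c_2\cdot\ldots\cdot c_n|\le K_{max}/K_{min}$, and applying it the other way round (lower bound on the numerator, upper bound on the denominator) yields $|c_1\cdot c_2\cdot\ldots\cdot c_n|\ge K_{min}/K_{max}$, which is precisely the claimed two-sided inequality.

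The only real subtlety — and it is a mild one — is the justification that the transfer-operator sum collapses to a single term, i.e. that $f^{-1}(x_{i+1})$ is the singleton $\{x_i\}$ rather than being empty or larger; this is exactly where injectivity of $f$ is used. The positivity of $\phi$ guaranteed by $K_{min}>0$ is what makes the passage to the reciprocal form $|c_i|=\phi(x_i)/\phi(x_{i+1})$ legitimate. Everything else is a routine telescoping computation.
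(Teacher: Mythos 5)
Your proposal is correct and follows essentially the same route as the paper's own proof: injectivity collapses the Perron--Frobenius sum to the single term $\phi(x_{i+1})=\phi(x_i)/|f'(x_i)|$, the product telescopes to $\phi(x_1)/\phi(x_{n+1})$, and the two-sided bound on $\phi$ gives the claim. You merely spell out the singleton-preimage step and the positivity justification in more detail than the paper does.
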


\begin{proof}
Let $\phi$ be an eigenfunction of the Perron-Frobenius operator for an injective PAM $f$. Then injectivity of $f$ and the 
fact that $y = f(x)$ implies that $\phi (y) = \frac {\phi (x)} {\left | f '(x) \right |}$. We denote $f '(x_i)$ by $c_i$. Then $\phi(x_{n+1})=\frac{\phi(x_1)}{\left|c_1\cdot c_2\cdot\ldots\cdot c_n\right|}$  and 
$\left|c_1\cdot c_2\cdot\ldots\cdot c_n\right|=\frac{\phi(x_1)}{\phi(x_{n+1})}$.
Now  we can bound $\left|c_1\cdot c_2\cdot\ldots\cdot c_n\right|$ by 
$\frac{K_{min}}{K_{max}}$ and $\frac{K_{max}}{K_{min}}$.
\end{proof}

\begin{theorem}\label{th1}
Given an injective PAM $f$ with two intervals and 
the existence of a $f$-invariant density function $\phi$
such that there are $ K_ {min}> 0 $ and $ K_ {max} <+ \infty $ and 
the following inequality holds $ K_ {min} <\phi (x) <K_ {max}$ for all $x$ from the domain of $f$. Then the reachability problem for $ f $ is decidable.
\end{theorem}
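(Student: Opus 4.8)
The algorithm computes the orbit $x_1=x,\ x_2=f(x_1),\dots$ step by step; the whole issue is to bound, computably in $f,x,y$, how many steps are enough. Let $\mathbb{F}$ be the finite set of primes dividing a numerator or denominator of $a_1,a_2,b_1,b_2$ or $x$, and $m=\prod_{p\in\mathbb{F}}p$. Since $f$ preserves the property of having a denominator built only from primes of $\mathbb{F}$, we have $\|x_n\|_m<+\infty$ for all $n$, so by Corollary~\ref{cor1} the orbit $O_f(x)$ is finite if and only if every $\|x_n\|_p$, $p\in\mathbb{F}$, stays bounded; in that case it is eventually periodic and the algorithm halts at the first repeated point. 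It therefore remains to handle the case where some $\|x_n\|_p$ is unbounded, and I claim that then $\|x_n\|_m\to\infty$ at an effective rate, so that $x_n=y$ can hold for only finitely many, computably bounded, $n$.

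The engine is Lemma~\ref{lemma1}. Write $x_{n+1}=C_nx+D_n$ with $C_n=\prod_{i=1}^{n}f'(x_i)=a_1^{k_1(n)}a_2^{k_2(n)}$, where $k_j(n)=|\{i\le n:x_i\in X_j\}|$; Lemma~\ref{lemma1} gives $L^{-1}\le|C_n|\le L$ for all $n$, with $L=K_{max}/K_{min}$. Since $|C_{n+1}|/|C_n|\in\{|a_1|,|a_2|\}$, the sequence $(|C_n|)$ is a multiplicative walk confined to a fixed bounded interval. If $|a_1|=|a_2|=1$ then $a_1,a_2\in\{1,-1\}$, hence $\|a_1\|_p=\|a_2\|_p=0$ for all $p$, and the ultrametric inequality $\|u+v\|_p\le\max(\|u\|_p,\|v\|_p)$ gives $\|x_n\|_p\le\max(\|x\|_p,\max_j\|b_j\|_p)$, so $O_f(x)$ is finite. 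Otherwise, for $O_f(x)$ to be infinite we must have, after relabelling, $|a_1|<1<|a_2|$; and then, taking logarithms in $L^{-1}\le|C_n|\le L$ (applied to the initial segments $x_1,\dots,x_{n+1}$), one gets $|k_1(n)-\theta n|\le\kappa$ for all $n$, where $\theta=\log|a_2|/\log|a_2/a_1|\in(0,1)$ and $\kappa=\log L/\log|a_2/a_1|$. Consequently, for every prime $p$,
\[
\|C_n\|_p=k_1(n)\|a_1\|_p+k_2(n)\|a_2\|_p=n\,\delta_p+O(1),\qquad \delta_p:=\theta\|a_1\|_p+(1-\theta)\|a_2\|_p,
\]
and combining the product formula $\sum_p\|a_j\|_p\log p=-\log|a_j|$ with $\theta\log|a_1|+(1-\theta)\log|a_2|=0$ yields $\sum_{p\in\mathbb{F}}\delta_p\log p=0$.

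To conclude, track $w_n:=\|x_n\|_p$. The ultrametric inequality gives $w_{n+1}\le\max(\|a_{j(n)}\|_p+w_n,\ \beta_p)$ with $\beta_p=\max_j\|b_j\|_p$, and unwinding from the last index at which $w_i\le\beta_p$ yields $w_{n+1}\le\|C_n\|_p-\min_{i\le n}\|C_i\|_p+O(1)$. If $\delta_p\le 0$ the walk $\|C_n\|_p$ has non-positive drift, so the right-hand side is bounded and so is $w_n$; and since $\sum_p\delta_p\log p=0$, if $\delta_p\le 0$ for all $p$ then in fact all $\delta_p=0$, $\|x_n\|_m$ is bounded, and $O_f(x)$ is finite. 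Otherwise $\delta_p>0$ for some $p$, so $\|C_n\|_p\to+\infty$; writing $x_{n+1}=C_n\bigl(x+\sum_{i=1}^{n}b_{j(i)}/C_i\bigr)$, the correction series converges $p$-adically (its terms have $p$-weight tending to $-\infty$), whence $\|x_{n+1}\|_p=\|C_n\|_p+O(1)\to+\infty$; in the degenerate situation where that $p$-adic sum vanishes, either the same growth reappears, for some other prime of $\mathbb{F}$, in the denominator of $x_{n+1}$, or $\|x_n\|_m$ is bounded and $O_f(x)$ finite. In every case $O_f(x)$ is finite, or $\|x_n\|_m\to\infty$ at a rate one can estimate; in the latter case $x_n=y$ forces $\|x_n\|_p=\|y\|_p$, hence $n\le N$ for an $N$ computable from $\theta,\delta_p,\kappa,\beta_p,\|x\|_m,\|y\|_m$. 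The algorithm runs the orbit until a point repeats or step $N$ is reached, and answers accordingly.

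The main obstacles lie in the last two paragraphs: extracting the bounded-discrepancy (Sturmian-type) structure of the itinerary from Lemma~\ref{lemma1} with an explicit constant $\kappa$, and, above all, promoting the easy \emph{upper} bound on $\|x_n\|_p$ to a matching \emph{lower} bound by controlling the $p$-adic behaviour of the correction term $\sum_i b_{j(i)}/C_i$, so that the halting index $N$ is genuinely computable. The few degenerate sub-cases ($|a_1|=|a_2|=1$, an eventually constant itinerary, all $\delta_p=0$) must then be disposed of separately, but each of them forces $O_f(x)$ to be finite.
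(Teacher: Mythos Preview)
Your approach and the paper's diverge at a crucial point, and the divergence is exactly where your argument stalls. You try to control the \emph{asymptotic} behaviour of the whole orbit: you show $|k_1(n)-\theta n|\le\kappa$ from Lemma~\ref{lemma1}, introduce the drift $\delta_p$, and then attempt to prove that $\|x_n\|_p$ either stays bounded or tends to $+\infty$ at a computable rate. The difficulty you yourself flag --- promoting the easy upper bound on $\|x_n\|_p$ to a matching lower bound by controlling the $p$-adic limit of $\sum_i b_{j(i)}/C_i$ --- is a genuine gap, not a technicality. If $x+S=0$ in $\mathbb{Q}_p$ (or is $p$-adically very small), you have no effective estimate for when $\|x_n\|_p$ first exceeds $\|y\|_p$, so the halting bound $N$ is not computable from the data. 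Your fallback (``the growth reappears for some other prime, or the orbit is finite'') is asserted, not proved.

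The paper sidesteps this problem entirely by never looking at the full orbit. It fixes a hypothetical reachability path $x=x_1,\dots,x_{n+1}=y$ and bounds $\|x_i\|_p$ \emph{along that path}, using both endpoints. Set $h=\max\{\|b_1\|_m+1,\|b_2\|_m+1,\|x\|_m,\|y\|_m\}$ and look at a maximal excursion where $\|x_i\|_p>h$. On the excursion the ultrametric gives the \emph{equality} $\|x_{i+1}\|_p=\|x_i\|_p+\|c_i\|_p$ (not just an inequality), so the $p$-weight is an additive walk; and because the excursion is sandwiched between two points of weight $\le h$, that walk is a \emph{bridge} returning to level $h$. Lemma~\ref{lemma1} then bounds either the length of the bridge (when $|a_1^{\alpha}a_2^{\beta}|\neq 1$) or its maximal height (when $|a_1^{\alpha}a_2^{\beta}|=1$), in both cases by a quantity computable from $K_{\min},K_{\max},a_1,a_2,h$. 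This yields a uniform bound $M_2$ on $\|x_i\|_p$ for every $p\in\mathbb{F}$ and hence an explicit bound $m^{M_2}$ on the length of any $x\to y$ path --- no lower bounds on $\|x_n\|_p$, no $p$-adic limits, no case analysis on the sign of a drift. The idea you are missing is precisely this: bound the weight on a path between two points of \emph{known} weight, rather than trying to describe where an unbounded orbit goes.
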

\begin{proof}
A piecewise-affine map $f$ with two intervals $X_1$ and $X_2$ is defined as follows: 
$f(x) = a_ix + b_i $, if $ x \in X_i$. Note that we only consider PAMs over rational numbers where 
a starting point as well as all coefficients and borders of intervals are rational numbers.
Let us consider an arbitrary pair of points $x,y\in X$ such that $y\in O_f(x)$ and
the sequence of points $x_1,x_2,\ldots,x_{n+1}$ from the orbit  $O_f(x)$ 
such that  $x_1=x$, $x_{n+1}=y$, $x_{i+1}=f(x_i)$, $1\le i\le n$.

If it would be possible to find a computable upper bound $M$ 
on the length $n$ of such reachability sequence  from $x$ to $y$
(based on $K_{min},K_{max} $ $ a_1 $, $ a_2 $, $ b_1 $, $ b_2 $, $ x $, $ y $)
we could solve the reachability problem by considering only initial segment of 
the reachability path of length less or equal to $M$.

In general the existence of such bound is not obvious, however 
if we can show that in the path $x_1,x_2,\ldots,x_{n+1}$ where $x_1=x$ and  $x_{n+1}=y$
the  $p$-adic weights $\|x_i\|_m$ for all $i\in\{1,\ldots,n+1\}$ are bounded by some value $M_1$
then we can restrict $M$ as follows:  $ n <M <m ^ {M_1}$
as $\|x_i\|_m$  is the number of decimal places in the representation of $ x_i $ in base $m$.
In particular if there is a orbit's segment of 
length that is greater than $m ^ {M_1}$ and which 
consists of numbers with $p$-adic weights less than $M_1$
then it always contains two identical numbers and the orbit loops.

So in order to prove a computable upper bound on $ \| x_i \|_m $ it
is sufficient to prove that for any $p \in \F$ 
$p$-adic weights (i.e. logarithmic norms) $ \| x_i \|_p $ are bounded from above by a computable number  $M_2$ for all $i \in \{1, \ldots, n + 1 \}$. 
Note that by the definition of the logarithmic $m$-norm, $ M_2 = M_1 $.

Let us define a number $h=\max\{\|b_{1}\|_m+1,\|b_{2}\|_m+1,\|x_1\|_m,\|x_{n+1}\|_m\}$.  
Let us take an arbitrary $p \in \F$ and select a subsequence 
$x_j$, $x_{j+1}$, $\ldots,x_{r}$ in the sequence $x_1$, $x_2$, $\ldots$, $x_{n+1}$
such that its elements $x\in\{x_{j+1}, x_{j+2}, \ldots,x_{r-1}\}$ have the following property
$\|x\|_p > h$, but at the same time $\|x_{j}\|_p\le h$ and $\|x_{r}\|_p\le h$.
For simplicity, but without loss of generality, we assume that $j = 1$, $r = n + 1$ and $ \| x_1 \|_p = \| x_{n + 1} \|_p = h$.

Later we either  will find the computable upper bound on $n$ or will show a computable bound $M_2$  on $p$-adic weights
based on $K_ {min}, K_ {max}, a_1, a_2$ and $h$. As stated above, it is enough for proving the theorem.

Let us define $x_{i+1}=f(x_i)=c_ix_i+d_i$, where $c_i\in \{a_1,a_2\}$, $d_i\in\{b_1,b_2\}$.
From the properties (1) and (2) of the $p$-adic weights and the fact that 
$\|x_i\|_p>\|b_j\|_p$, $i\in\{1,2,\ldots,n\}$, $j\in\{1,2\}$ follows that $\|x_{i+1}\|_p=\|x_i\|_p+\|c_i\|_p$.
This implies 
$\|x_{n+1}\|_p=\|x_1\|_p+\sum_{i=1}^{n}\|c_i\|_p=\|x_1\|_p+\sum_{i=1}^{2}\alpha_i\|a_i\|_p$
for some non-negative numbers $\alpha_1$ and $\alpha_2$, where $\alpha_1+\alpha_2=n$. 
Taking into account that  $\|x_1\|_p=\|x_{n+1}\|_p$ we have
$\sum_{i=1}^{2}\alpha_i\|a_i\|_p=0.$

Let $r$  be  the greatest common divisor of $\alpha_1$ and $\alpha_2$.
We define $\alpha=\alpha_1/r$ and $\beta=\alpha_2/r$, i.e. $\alpha$ and $\beta$ 
are smallest non-negative integers such that  $\alpha \|a_1\|_p+\beta \|a_2\|_p=0$.
Thus, we obtain 
$\sum_{i=1}^{2}\alpha_i\|a_i\|_p=r(\alpha \|a_1\|_p+\beta \|a_2\|_p).$
By Lemma~\ref{lemma1} we have:
$\frac{K_{min}}{K_{max}}\le|c_1\cdot c_2\cdot\ldots\cdot c_n|\le \frac{K_{max}}{K_{min}}$,
i.e. 
$\frac{K_{min}}{K_{max}}\le|a_1^{\alpha}a_2^{\beta}|^r\le \frac{K_{max}}{K_{min}}$,

Now we consider two cases $|a_1^{\alpha}a_2^{\beta}|\neq 1$ and $|a_1^{\alpha}a_2^{\beta}|=1$. 
The first case corresponds to linearly independent columns of the matrix $A_f$, $\operatorname{rank}(A)=2$, and 
the second case to linear dependence, $\operatorname{rank}(A_f)<2$.

Let $|a_1^{\alpha}a_2^{\beta}|\neq 1$, then either $|a_1^{\alpha}a_2^{\beta}|> 1$ or $|a_1^{\alpha}a_2^{\beta}|< 1$.
If $|a_1^{\alpha}a_2^{\beta}|> 1$ then from  $|a_1^{\alpha}a_2^{\beta}|^r\le \frac{K_{max}}{K_{min}}$ follows that  $r\le\frac{\ln\frac{K_{max}}{K_{min}}}{\ln |a_1^{\alpha}a_2^{\beta}|}$.  On the other hand if  $|a_1^{\alpha}a_2^{\beta}|< 1$,
then from $|a_1^{\alpha}a_2^{\beta}|^r\ge \frac{K_{min}}{K_{max}}$ follows $r\le \frac{\ln\frac{K_{min}}{K_{max}}}{\ln |a_1^{\alpha}a_2^{\beta}|}$.

Now suppose that $|a_1^{\alpha}a_2^{\beta}|= 1$. 
The only interesting case is when $a_1 \neq 1 $ and $a_2 \neq 1$ as the cases where  $|a_1|=1$, $|a_2|=1$ or both
correspond to the trivial case of piecewise-affine mapping (i.e. with no more than one linear factor).

Without loss of generality, we assume that  $a_1> 1$ and $a_2<1$. Note also that $\|a_2\|_p=-\frac{\alpha}{\beta}\|a_1\|_p$.
Let us now consider an arbitrary subsequence of consecutive points $x_1x_2\ldots x_{j+1}$, $j\le n$ of the original reachability path. 
Assuming that $\alpha_1$, $\alpha_2$ such that $|c_1c_2\ldots c_j|=|a_1|^{\alpha_1}|a_2|^{\alpha_2}$ we have that
$$\|x_{j+1}\|_p=\|x_1\|_p+\alpha_1\|a_1\|_p+\alpha_2\|a_2\|_p=(\alpha_1-\alpha_2\frac{\alpha}{\beta})\|a_1\|_p.$$
On the other hand from Lemma~\ref{lemma1} we know that:
$\frac{K_{min}}{K_{max}}\le|a_1|^{\alpha_1}|a_2|^{\alpha_2}\le \frac{K_{max}}{K_{min}}.$
Since $|a_1^{\alpha}a_2^{\beta}|= 1$,then  $|a_2|=|a_1|^{-\frac{\alpha}{\beta}}$ and
\[
\frac{K_{min}}{K_{max}}\le|a_1|^{\alpha_1-\alpha_2\frac{\alpha}{\beta}}\le \frac{K_{max}}{K_{min}}.
\]
Taking into account assumption that $|a_1|>1$, we get
$\alpha_1-\alpha_2\frac{\alpha}{\beta}\le\frac{\ln \frac{K_{max}}{K_{min}}}{\ln |a_1|}.$

Now it follows that:
\[
\|x_{j+1}\|_p=\|x_1\|_p+(\alpha_1-\alpha_2\frac{\alpha}{\beta})\|a_1\|_p\le h+\left(\frac{\ln \frac{K_{max}}{K_{min}}}{\ln |a_1|}\right)\|a_1\|_m.
\]
Thus, we have shown a computable upper bound for $p$-adic weights  of the orbital elements that can reach a point $y$. 
Finally, in view of provided reasoning, we shown that the reachability problem for this type of PAMs is decidable. \qed
\end{proof}


The theorem can be applied for a larger class of PAMs if more information would be known about the convergence of density functions under the action of the Perron-Frobenius operator.
Let us call an ensemble $A$ to be statistically fixed with respect to $f$, if $\phi_A = L_f (\phi_A)$. E.g. if someone can show that in injective PAM 
 all statistically fixed ensembles have identical distribution functions then Theorem~{\ref{th1}} can be
applied to show decidability in injective PAMs.

Following the proposed approach based on $p$-adic weights 
we define another fragment of PAMs for which the reachability problem is decidable.
In particularly we remove the condition on eigenfunction of the transfer operator and injectivity of piecewise-affine map 
and consider a PAM $f$ with only constraints on linear coefficients in affine maps.
More specifically we require that the powers of prime numbers from 
prime factorizations of linear coefficients should have the same signs (i.e. two sets 
of prime numbers used in nominator and denominator are disjoint).
Let us denote for a PAM $f$ a matrix $A_f$ with values $(a_{ji})$, where $a_{ji} = \| a_i \|_{p_j}$, $1 \le i \le l$, $ 1 \le j \le k$. 
The rank of $A_f$ is  denoted by $\operatorname{rank} (A_f)$.

\begin{theorem}\label{th2}
The reachability problem for a PAM $f$ is decidable
if every row of a matrix $A_f$  contains values of the same sign, 
(i.e.  $a_{ji}\cdot a_{j'i}\geq 0$, 
for all $i,j$ such that
$1\le i\le l$, $1\le j,j'\le k$).
\end{theorem}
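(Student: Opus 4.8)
The plan is to mimic the strategy of Theorem~\ref{th1}: show that along any reachability path $x_1,\ldots,x_{n+1}$ with $x_1=x$ and $x_{n+1}=y$, the $m$-weights $\|x_i\|_m$ are bounded by a computable quantity $M_1$, so that by Corollary~\ref{cor1} only finitely many distinct points can occur and a path longer than $m^{M_1}$ must loop. As in the earlier proof, it suffices to bound $\|x_i\|_{p}$ from above, for each $p=p_j\in\F$ separately, by a computable number. First I would set $h=\max\{\|b_1\|_m+1,\ldots,\|b_l\|_m+1,\|x_1\|_m,\|x_{n+1}\|_m\}$, and, by the same maximal-subsequence argument used in Theorem~\ref{th1}, reduce to the case where $\|x_1\|_p=\|x_{n+1}\|_p=h$ and every intermediate point satisfies $\|x_i\|_p>h\ge\|b_j\|_p$. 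Then properties (1)--(2) of the $p$-adic weight give, for each index $i$ inside the block, $\|x_{i+1}\|_p=\|x_i\|_p+\|c_i\|_p$, where $c_i=a_{t}$ is the slope of the interval containing $x_i$; hence, for any prefix $x_1\ldots x_{j+1}$ of the block,
\begin{equation}
\|x_{j+1}\|_p=h+\sum_{t=1}^{l}\alpha_t^{(j)}\,\|a_t\|_p=h+\sum_{t=1}^{l}\alpha_t^{(j)}\,a_{jt},
\end{equation}
where $\alpha_t^{(j)}\ge 0$ counts how many of the first $j$ steps used slope $a_t$.

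Now the sign hypothesis on the rows of $A_f$ does the work. Fix the row index $j$. By assumption all entries $a_{j1},\ldots,a_{jl}$ have the same sign. If they are all $\le 0$, then every partial sum $\sum_t \alpha_t^{(j)} a_{jt}\le 0$, so $\|x_{j+1}\|_p\le h$ for all $j$ inside the block --- but this contradicts the assumption that the intermediate points have weight strictly greater than $h$, so in this case the block is empty (length $\le 1$), and no bounding is needed. If instead all entries are $\ge 0$, then the partial sums are nondecreasing in $j$, so the maximum over the block is attained at the right endpoint, where $\|x_{n+1}\|_p=h$; since the partial sums are nonnegative and their running total never exceeds the final value $0$ only when all $a_{jt}=0$, and when some $a_{jt}>0$ the total at the endpoint equals $\sum_t\alpha_t a_{jt}=\|x_{n+1}\|_p-h=0$, forcing $\alpha_t=0$ for every $t$ with $a_{jt}>0$ --- again the block degenerates. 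The only surviving subcase is $a_{j1}=\cdots=a_{jl}=0$, i.e. the prime $p_j$ does not appear in any slope at all, and then $\|x_{i+1}\|_p=\|x_i\|_p$ throughout, so $\|x_i\|_p\le h$ trivially.

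Putting the rows together: for every $i$ and every $p\in\F$ we obtain $\|x_i\|_p\le h$, hence $\|x_i\|_m\le h=:M_1$, a computable bound depending only on the $b_i$, $x$, $y$ and $\F$. By Corollary~\ref{cor1} there are only finitely many rationals in $[0,1)$ with $m$-weight $<M_1+1$, so any orbit segment of length exceeding that count repeats a point and the orbit is eventually periodic; thus to decide whether $y\in O_f(x)$ it suffices to compute the first $m^{M_1}$ iterates. I would also handle the bookkeeping point that $h$ itself depends on $\|x_{n+1}\|_m=\|y\|_m$, which is fine since $y$ is part of the input, and the reduction $X\subseteq[a,b)\to[0,1)$ from the preliminaries lets us assume $S^1=[0,1)$.

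The step I expect to be the genuine obstacle is making the ``block degenerates'' dichotomy airtight when a row has mixed zero and positive entries, and more importantly making sure the maximal-subsequence decomposition really does cover the whole path: between consecutive visits to the level $\|x\|_p\le h$ one must argue, exactly as in Theorem~\ref{th1}, that the entry and exit weights can be normalized to equal $h$ and that the recurrence $\|x_{i+1}\|_p=\|x_i\|_p+\|c_i\|_p$ is valid at every interior step (this uses $\|x_i\|_p>h\ge\|b_j\|_p+1>\|b_j\|_p$, so that property (2) applies with strict inequality and there is no cancellation). Once that reduction is in place, the sign condition makes the partial-sum monotonicity argument essentially immediate, which is the whole point of the hypothesis --- it removes the need for Lemma~\ref{lemma1} and the rank analysis that Theorem~\ref{th1} required.
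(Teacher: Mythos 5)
Your argument is correct in substance, but it takes a genuinely different route from the paper's. The paper never decomposes the path into blocks and never computes an a priori bound on the path length. It observes directly that, for each prime $p\in\F$, the sign condition on the corresponding row of $A_f$ makes the weight evolve monotonically: if all entries of the row are non-negative then $\|f(x)\|_p\ge\|x\|_p$ once $\|x\|_p>h$ (so the weight can never come back down), and if all are non-positive then $\|f(x)\|_p\le\max\{\|x\|_p,h\}$ (so the weight stays bounded). The algorithm is then an \emph{online} simulation of the orbit with three stopping events: $y$ is hit, the orbit loops, or some iterate's $m$-weight exceeds $\max\{h,\|y\|_m\}$, after which $y$ is provably unreachable. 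Your proof establishes the contrapositive of that third stopping rule in quantitative form --- on any path that actually reaches $y$, every $\|x_i\|_p$ stays at most your (enlarged) $h$ --- and converts it, via Corollary~\ref{cor1}, into an explicit iteration bound $m^{M_1}$, mimicking Theorem~\ref{th1}. Both arguments rest on the same core fact, so yours is a valid alternative that buys an explicit complexity bound at the cost of the block-decomposition bookkeeping. Two points of care in your write-up: first, the normalization $\|x_1\|_p=\|x_{n+1}\|_p=h$ inherited from Theorem~\ref{th1} is not literally available; all you may assume is that a block is entered from, and must exit to, a weight $\le h$, and your Case 2 is cleaner if phrased as ``with non-negative row entries the weight is non-decreasing inside a block, so once it exceeds $h$ it can never return to $\le h\ge\|y\|_p$, hence no such block occurs on a successful path'' rather than via the partial-sums-equal-zero argument. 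Second, the step recurrence $\|x_{i+1}\|_p=\|x_i\|_p+\|c_i\|_p$ is not justified merely by $\|x_i\|_p>\|b_j\|_p$ (since $\|c_i\|_p$ may be very negative, $\|c_ix_i\|_p$ could drop to or below $\|d_i\|_p$ and property (2) would not apply); the correct justification is that whenever the \emph{target} $x_{i+1}$ is interior one has $\|x_{i+1}\|_p>h>\|d_i\|_p$, which forces $\|c_ix_i\|_p>\|d_i\|_p$ and hence equality. With those repairs the proof is sound.
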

\begin{proof}
Let us consider a PAM $f$
 of the form 
$f(x)=a_i  x+b_i$ for $x \in X_i$ where all coefficients $a_i$,$b_i$  and the extremities of a finite number of 
 bounded intervals $X_i$ are rational numbers.
Let us define $h=\max\{\|b_{1}\|_m,\|b_{2}\|_m,\ldots,\|b_l\|_m\}$.  
The condition of the theorem means  that for any prime $p\in\mathbb{F}$ 
all linear coefficients of the map $f$ 
have non-zero $p$-adic weights of the same sign.

In this case, if $p$-adic weights of linear coefficients of $f$ are non-negative, 
then for any $x\in X$ from $\|x\|_p>h$ follows that  $\|f(x)\|_p\geq\|x\|_p$ 
and therefore 
 $\|f(x)\|_m\geq\|x\|_m$ (i.e. $m$-adic weight  does not decrease).
If $p$-adic weight of linear coefficients of the mapping are negative, then for any
$x\in X$ we have $\|f(x)\|_p\le\max\{\|x\|_p,h\}$.

Thus, in the sequence of reachable points for an orbit of a map $f$ either all points of the orbit have  $m$-adic weights
bounded from above  by $h$, then we have a cyclic orbit, or from some moment 
when $m$-adic weight of a reachable point exceeds $h$  it does not decrease and 
again, either orbit loops or $m$-adic weight increases indefinitely.

Thus, in order to decide whether $y$ is reachable, i.e.  $y \in O_f(x)$, it
is sufficient to start generating a sequence of reachable points in the orbit  $O_f(x)$ and wait for one of the events, 
where either
1) a point in the orbit is equal to $y$  {\sl ( $y$ is reachable )}, or 
2) the orbit will loop and $y \notin O_f(x) $   {\sl ( $y$ is not reachable )}, or 
3) a point $x'$ is reachable,  such that $\|x' \|_m>\max \{h, \| y \|_m \} $, 
and then $ y \notin O_f(x) $   {\sl ( $y$ is not reachable )}.  \qed
\end{proof}

\begin{definition}
A piecewise affine mapping $f: S^1 \to S^1$  is complete if for a set of disjoint intervals $S^1 = X_1 \cup X_2  \cup \ldots \cup X_n$,
$f(X_i) = S^1$ for any $i=1..n$.
\end{definition}

\begin{definition}
Let be $F:\mathbb{R}\rightarrow \mathbb{R}$ is the lifting of a continuous map  $f:S^1\rightarrow S^1$ on $\mathbb{R}$, 
i.e.  $f(\{x\})=\{F(x)\}$. 
Then by the degree $\deg(f)$ of a map $f$ we denote the number $F(x+1)-F(x)$, which is independent from 
the choice of the point $x$ and the lifting $F$.
\end{definition}

\begin{corollary}
The reachability problem for complete piecewise affine mappings with two intervals \footnote{In particularly  the continuous piecewise affine mapping of degree two} is decidable.
\end{corollary}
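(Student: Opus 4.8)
The plan is to derive this as a consequence of Theorem~\ref{th2} by showing that a complete piecewise affine map with two intervals automatically satisfies the sign condition on the matrix $A_f$. First I would unpack what completeness forces on the linear coefficients. If $S^1 = X_1 \cup X_2$ with $X_i$ an interval of length $\ell_i$ (so $\ell_1 + \ell_2 = 1$), and $f(X_i) = S^1 = [0,1)$, then the affine piece $f(x) = a_i x + b_i$ must map an interval of length $\ell_i$ onto an interval of length $1$; hence $|a_i| = 1/\ell_i$. Since $\ell_i$ is a rational number in $(0,1)$, we get $|a_i| > 1$, and more importantly $a_i = \pm 1/\ell_i$ where $\ell_i = u_i / v_i$ in lowest terms with $0 < u_i < v_i$. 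Then $\|a_i\|_p = \|v_i\|_p - \|u_i\|_p$ for each prime $p$.

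The key step is then to observe that $\ell_1 + \ell_2 = 1$ with $\ell_1, \ell_2 > 0$ constrains the denominators: writing $\ell_1 = u_1/v$ and $\ell_2 = u_2/v$ over a common denominator $v$ with $u_1 + u_2 = v$, the reciprocals are $1/\ell_i = v/u_i$. For this to give the sign condition required by Theorem~\ref{th2} — namely that for each prime $p \in \mathbb{F}$, the weights $\|a_1\|_p$ and $\|a_2\|_p$ have the same sign — I would argue as follows. The relevant set $\mathbb{F}$ of primes is determined by the coefficients of $f$ and the starting/target points; the weights are $\|a_i\|_p = \|v\|_p - \|u_i\|_p$. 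I expect that for a complete two-interval map one can reduce to (or directly handle) the case where the interval endpoints are such that each $u_i$ shares no prime with $v$ that matters, so that $\|a_i\|_p = \|v\|_p \ge 0$ for all $p$, giving all rows of $A_f$ nonnegative. Alternatively, if some $u_i$ does contribute, one shows the sign is still uniform because both $a_1$ and $a_2$ have the same denominator structure coming from $v$. Once the sign condition is verified, Theorem~\ref{th2} applies verbatim and yields decidability. For the continuous case mentioned in the footnote — a continuous complete PAM of degree two — continuity at the breakpoint plus $\deg(f) = 2$ pins down the slopes even more rigidly (both slopes equal, or determined by the degree), so the sign condition is immediate.

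The main obstacle I anticipate is the case $|a_1^{\alpha} a_2^{\beta}| = 1$ style degeneracy — i.e. handling complete maps where the $u_i$'s introduce primes into the numerators of the $a_i$ in a way that could produce $p$-adic weights of opposite signs across the two rows for some prime $p$. Concretely, if $\ell_1 = 2/3$ and $\ell_2 = 1/3$, then $a_1 = \pm 3/2$ and $a_2 = \pm 3$, so $\|a_1\|_2 = -1 < 0$ while $\|a_1\|_3 = 1 > 0$ and $\|a_2\|_2 = 0$, $\|a_2\|_3 = 1$: here row for prime $2$ is $(-1, 0)$ and row for prime $3$ is $(1,1)$, which do satisfy $a_{ji} a_{j'i} \ge 0$ since the condition is per-column, not per-row — wait, one must be careful to read the hypothesis of Theorem~\ref{th2} correctly, as it requires each \emph{row} to be sign-uniform, i.e. $\|a_i\|_{p_j} \cdot \|a_{i'}\|_{p_j} \ge 0$ for fixed $j$. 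In the example row $2$ is $(-1,0)$ which is fine, row $3$ is $(1,1)$ fine. So the real work is a clean lemma: for a complete two-interval PAM, for every prime $p$ the pair $(\|a_1\|_p, \|a_2\|_p)$ has a uniform sign. I would prove this by noting $a_i = \pm v/u_i$ with $u_1 + u_2 = v$, so $\|a_i\|_p = \|v\|_p - \|u_i\|_p$; since $\gcd(u_1, u_2) \mid v$ and in the reduced form $\gcd(u_1,u_2)=1$ forces $u_1+u_2=v$ with $u_1,u_2$ coprime, a prime $p$ dividing $v$ to order $\|v\|_p$ can divide at most one of $u_1, u_2$ and must do so to order $< \|v\|_p$ (else it would divide the other via $u_1+u_2=v$), hence $\|a_i\|_p = \|v\|_p - \|u_i\|_p > 0$ whenever $p \mid v$, and $= -\|u_i\|_p \le 0$ when $p \nmid v$ — but if $p\nmid v$ then $p\nmid u_i$ either (as $u_i < v$ divides... no), so in fact $\|a_i\|_p = 0$ for $p \nmid v$. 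Thus each row is either all-positive (primes dividing $v$) or all-zero, giving the hypothesis of Theorem~\ref{th2}, and the corollary follows.
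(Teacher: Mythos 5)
Your proof follows essentially the same route as the paper's: completeness forces $|a_i|=1/\ell_i$ with $\ell_1+\ell_2=1$, and pairwise coprimality of $v,u_1,u_2$ (the paper's $n$, $m$, $n-m$) then yields the sign condition of Theorem~\ref{th2}. One small correction: your closing claim that $\|a_i\|_p=0$ for all $p\nmid v$ is false (a prime dividing $u_i$ but not $v$ gives $a_i$ a nonzero weight), but since $\gcd(u_1,u_2)=1$ at most one entry of such a row is nonzero, so the required product condition still holds --- exactly as in your own $\ell_1=2/3$ example --- and the argument goes through.
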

\begin{proof}
The condition of a piecewise affine map with two intervals $f: S^1 \to S^1$ to be complete means that 
$S^1 = X_1 \cup X_2 $ and $f (X_1) = f (X_2) = S^1$.
Thus, if  $X_1=\left[ 0,\frac{m}{n}\right]$ and $X_2=\left[\frac{m}{n}, 1\right)$, 
then $f\left(x\right)=a_1x+b_1$,
where $ a_1 = \pm \frac {n} {m}$, when $ x \in X_1 $, and $ f \left (x \right) = a_2x + b_2 $, where $ a_2 = \pm \frac {n} {n-m} $ at $ x \in X_2 $, $ m, n \in \mathbb {N} $, $\gcd (m, n) = 1 $. It is clear that $n$, $m$, $n-m$ are relatively prime.
So the conditions of Theorem~\ref{th2} are satisfied. \qed
\end{proof}

\section{PAM representation of $\beta$-expansions}

Given a rational non-integer $\beta>1$ and the number $x\in [0,1] $. 
The target discounted-sum 0-1 problem \cite{Dagstuhl_Games_2015,TDSP2015} is defined as follows:
{\it Is there a sequence $ w: \mathbb {N} \rightarrow \{0,1 \} $ of zeros and ones such that 
$x = \sum_{i = 1}^ {\infty} {w (i) \frac {1} {\beta^ i}}$.} 

For any $x \in S^1 $, there exists
$\beta$-expansion $w: \mathbb {N} \rightarrow \{0,1, \ldots, \lceil \beta \rceil-1 \}$ 
such that $x= \sum_{i = 1}^{\infty} {w (i) \frac {1} {\beta^ i}}$. 
If $w (i) \in \{0,1\}$ we call it {\bf $(0,1)-\beta$-expansion}.
Therefore, when $\beta \le 2$ the answer to the target discounted-sum problem is always positive. 
Therefore, the only interesting case is when $\beta>2$. We denote $D = \{0,1, \ldots, \lceil \beta \rceil-1 \}$. 
Then the minimal and maximal numbers, which are representable in the basis $\beta$ 
with digits from the alphabet $D$, are 
$min = \sum_ {i = 1}^{\infty} {0 \frac {1} {\beta^ i}} = 0 $ and $max = \sum_ {i = 1}^ {\infty} {(\lceil \beta \rceil-1) 
\frac {1} {\beta^ i}} = \frac {\lceil \beta \rceil-1} {\beta-1} $. When $\beta> 2$ then $max$ is always less then two. 
Let us denote by $X_d$ the interval $[\frac{min + d} {\beta}, \frac {max + d} {\beta})$ for each $d \in D$.
If $\beta$ is not an integer number then two intervals  $X_d$ and $X_ {d + 1}$ intersect. 
Also taking into account that $max <2$, then the intervals $X_d$ and $X_{d + 2}$ have no common points. 
Finally from the above construction we get the next lemma:
\begin{lemma}\label{lemma_intervals}
If $\beta>2$ and $\beta$ is rational/non-integer number:
$X_d\cap X_{d+1}\neq\emptyset$, $d<\lceil\beta\rceil-1$; 
$X_d\cap X_{d+2}=\emptyset$, $d<\lceil\beta\rceil-2$;
 $[min,max)=\cup_{d\in A}{X_d}$.
\end{lemma}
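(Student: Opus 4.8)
The plan is to reduce all three assertions to the single pair of \emph{strict} inequalities $1<max<2$ for the constant $max=\frac{\lceil\beta\rceil-1}{\beta-1}$ that appears in the definitions of the $X_d$. Once these are available, each of the three items follows by an elementary comparison of the endpoints of the half-open intervals $X_d=[\frac{d}{\beta},\frac{max+d}{\beta})$, which all have the common length $\frac{max}{\beta}$ and whose left endpoints $\frac{d}{\beta}$ are equally spaced with gap $\frac{1}{\beta}$.

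First I would prove $1<max<2$. Since $\beta$ is rational and \emph{non-integer}, we have $\lceil\beta\rceil=\lfloor\beta\rfloor+1$, hence $\beta<\lceil\beta\rceil<\beta+1$. The left inequality gives $\lceil\beta\rceil-1>\beta-1$, so $max>1$; the right inequality, together with the hypothesis $\beta>2$, gives $\lceil\beta\rceil<\beta+1<2\beta-1=2(\beta-1)$, so $max<2$. I would also record the trivial identity $(\beta-1)\,max=\lceil\beta\rceil-1$ (the definition of $max$ with denominators cleared), since it is needed for the third item.

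With these facts in hand the three claims are immediate. For $X_d\cap X_{d+1}\neq\emptyset$ with $d<\lceil\beta\rceil-1$, I would exhibit the point $\frac{d+1}{\beta}$: it is the included left endpoint of $X_{d+1}$, and it lies in $X_d$ because $\frac{d}{\beta}\le\frac{d+1}{\beta}<\frac{max+d}{\beta}$, where the strict inequality on the right is exactly $1<max$. For $X_d\cap X_{d+2}=\emptyset$ with $d<\lceil\beta\rceil-2$, I would compare the (unattained) right endpoint of $X_d$ with the left endpoint of $X_{d+2}$: the relation $\frac{max+d}{\beta}<\frac{d+2}{\beta}$ is exactly $max<2$, so the two intervals are in fact separated. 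Finally, for $[min,max)=\bigcup_{d\in D}X_d$: the leftmost endpoint of the union is $\frac{0}{\beta}=0=min$, and by the identity $(\beta-1)\,max=\lceil\beta\rceil-1$ the rightmost endpoint is that of $X_{\lceil\beta\rceil-1}$, namely $\frac{max+\lceil\beta\rceil-1}{\beta}=max$; since consecutive intervals overlap by the first claim, a one-line induction shows $\bigcup_{d=0}^{k}X_d=[0,\frac{max+k}{\beta})$, and taking $k=\lceil\beta\rceil-1$ yields exactly $[min,max)$, half-open on the right because each $X_d$ is.

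The computations are entirely routine, so there is no real obstacle; the only step that genuinely requires care is the first, where the hypothesis that $\beta$ is non-integer is essential to obtain the \emph{strict} inequality $\beta<\lceil\beta\rceil$ (for integer $\beta$ the lemma is false), and the hypothesis $\beta>2$ is essential for $max<2$ (otherwise digits beyond $\{0,1\}$ would be forced and the separation $X_d\cap X_{d+2}=\emptyset$ could break). A minor point to state carefully is the half-open convention: claim (1) really needs the right endpoint of $X_d$ to lie \emph{strictly} above $\frac{d+1}{\beta}$, i.e. $max>1$ and not merely $max\ge1$.
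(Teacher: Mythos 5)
Your proposal is correct and follows essentially the same route as the paper, which derives the lemma directly from the construction of the intervals $X_d$ together with the observations that $\beta$ being non-integer forces consecutive intervals to overlap and that $max=\frac{\lceil\beta\rceil-1}{\beta-1}<2$ for $\beta>2$. You simply make explicit the endpoint comparisons and the bound $1<max<2$ that the paper leaves as immediate from the construction.
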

\begin{proposition} For any $\beta$-expansion  there is
a non-deterministic PAM where a symbolic dynamic of visited intervals
(i.e. a sequence of symbols associated with  intervals)  from 
an initial point $x_0$ 
corresponds to its representation in base $\beta$.
\end{proposition}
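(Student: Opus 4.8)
The plan is to exhibit the non-deterministic PAM explicitly from the interval decomposition set up just before Lemma~\ref{lemma_intervals} and then verify the claimed correspondence in both directions. First I would define, on the domain $[min,max)$, the non-deterministic map $f(x)=\beta x-d$ whenever $x\in X_d$, for each digit $d\in D$; by Lemma~\ref{lemma_intervals} the sets $X_d$ cover $[min,max)$ and consecutive ones overlap, so $f$ is genuinely non-deterministic, and since $f(X_d)=[\beta\cdot\frac{min+d}{\beta}-d,\ \beta\cdot\frac{max+d}{\beta}-d)=[min,max)$ the map sends its domain into itself. As $[min,max)$ is a bounded interval it can be rescaled to $S^1$ by the conjugation $h(x)=(x-min)/(max-min)$ exactly as in the Preliminaries, which changes neither the orbit structure nor the symbolic dynamics, so it suffices to argue on $[min,max)$. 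I label the interval $X_d$ by the symbol $d$; a branch of the orbit of $x_0$ is then a sequence $x_0,x_1,x_2,\dots$ with a choice $d_{n+1}\in D$ at each step such that $x_n\in X_{d_{n+1}}$ and $x_{n+1}=\beta x_n-d_{n+1}$, and the word read off this branch is $d_1d_2d_3\cdots$.

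Next I would prove the direction ``expansion $\Rightarrow$ orbit''. Given $x_0\in[min,max)$ with a $\beta$-expansion $w:\mathbb{N}\to D$, put $x_n=\sum_{i=1}^{\infty}w(n+i)\beta^{-i}$, the $n$-th tail. Each $x_n$ is a number represented in base $\beta$ with digits in $D$, hence $x_n\in[min,max)$, and a one-line computation gives $\beta x_n-w(n+1)=x_{n+1}$. The membership $x_n\in X_{w(n+1)}$ is, by the very definition $X_{w(n+1)}=[\frac{min+w(n+1)}{\beta},\frac{max+w(n+1)}{\beta})$, equivalent to $min\le\beta x_n-w(n+1)<max$, i.e. to $x_{n+1}\in[min,max)$, which we have just checked. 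Thus $x_0,x_1,x_2,\dots$ is a legitimate branch of the orbit of $x_0$ whose word is exactly $w$.

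Then I would prove the converse ``orbit $\Rightarrow$ expansion''. Take any branch of the orbit of $x_0$ with symbols $d_1,d_2,\dots$, so $x_{n+1}=\beta x_n-d_{n+1}$; solving for $x_0$ by telescoping yields $x_0=\sum_{i=1}^{n}d_i\beta^{-i}+\beta^{-n}x_n$ for every $n$. Since all $x_n$ lie in the bounded set $[min,max)$ and $\beta>1$, the remainder $\beta^{-n}x_n$ tends to $0$, so $x_0=\sum_{i=1}^{\infty}d_i\beta^{-i}$ with every $d_i\in D$, which is a $\beta$-expansion of $x_0$. Putting the two directions together gives the stated correspondence between the symbolic dynamics of $f$ and base-$\beta$ representations of the starting point.

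The part that needs the most care — more bookkeeping than a genuine obstacle — is making the half-open/closed conventions for the intervals $X_d$ mutually consistent, so that ``$x\in X_d$'' is exactly equivalent to ``the tail $\beta x-d$ is still representable with digits in $D$''. Getting this right is precisely what makes the overlaps of the $X_d$ carry exactly the non-determinism needed to realize every $\beta$-expansion and no spurious words, and it is also where the hypothesis that $\beta$ is rational and non-integer enters: for integer $\beta$ the intervals would tile $[min,max)$ and the map would collapse to the ordinary (deterministic) $\beta$-transformation.
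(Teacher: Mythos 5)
Your proposal is correct and follows essentially the same route as the paper: you construct the same non-deterministic map $f(x)=\beta x-d$ on the overlapping intervals $X_d$, and verify both directions via the telescoping identity $x_0=\sum_{i=1}^{n}d_i\beta^{-i}+\beta^{-n}x_n$ with the remainder vanishing because the orbit stays in the bounded set $[min,max)$. Your write-up is somewhat more explicit than the paper's (in particular the check that the tail $x_n$ lies in $X_{w(n+1)}$ exactly when $x_{n+1}\in[min,max)$), but there is no substantive difference in approach.
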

\begin{proof} 
Let us define the piecewise affine mapping $f\subseteq [min,max)\times [min,max)$ as follows
 $f=\{(x,\beta x-d)|x\in X_d, d\in D\}$. 
It directly follows from this definition that $f(X_d)=[min,max)$. 
%
\begin{figure}[htb]
	\centering
		\includegraphics[width=0.55\textwidth]{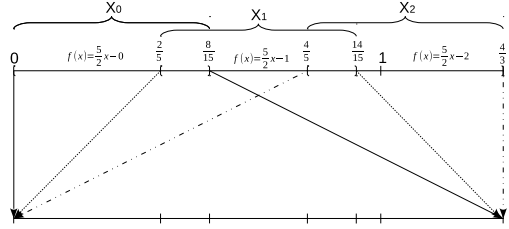}
				\caption{A non-deterministic PAM for $\frac{5}{2}$-expansion}
	\label{fig:pam5-2}
\end{figure}
%
Let us consider an orbit $f^i(x)=x(i)$, $i \in \mathbb{Z^+}$.
We say that $d_i \in D$ such that  $x(i) \in X_{d_i}$. 
Then for any $n\in\mathbb{N}$ $min<\left|\beta^nx-d_1\beta^{n-1}-d_2\beta^{n-2}-\ldots-d_n\beta^0\right|<max$,
and in other form $\frac{min}{\beta^{n}}<\left|x-\sum_{i=1}^{n}(d_i\frac{1}{\beta^i})\right|<\frac{max}{\beta^{n}}$. 
So $\left|x-\sum_{i=1}^{n}(d_i\frac{1}{\beta^i})\right|\rightarrow 0$, $n\rightarrow\infty$, 
and therefore $x=\sum_{i=1}^{\infty}(d_i\frac{1}{\beta^i})$. 
Let us consider it in other direction. Let 
$x=\sum_{i=1}^{\infty}(d_i\frac{1}{\beta^i})$, then the sequence  $\x(i)$, where $\x(0)=x$, $\x(i+1)=\beta \x(i)-d_i$,
is the orbit of  $x$ in PAM $f$. 
Let us name the constructed map as the {\bf $\beta$-expansion PAM}. 
\qed \end{proof}

The nondeterministic $\beta$-expansion can be translated into deterministic maps 
corresponding to {\sl greedy} and {\sl lazy} expansions as follows:
\begin{definition}
A function $f:[min,max)\rightarrow [min,max)$ is the greedy $\beta$-expansion PAM
if the domain $[min,max)$ is divided on intervals $X'_{d}$, $d\in\{0,1,\ldots,\lceil\beta\rceil-1\}$
such that $X'_{\lceil\beta\rceil-1}=X_{\lceil\beta\rceil-1}$,
$X'_{d-1}=X_{d-1}-X_{d}$,  $d\in\{1,2,\ldots,\lceil\beta\rceil-1\}$
and $f(x)=\beta x-d$ iff  $x\in X'_d$.
\end{definition}

Since $X_d=[\frac{min+d}{\beta},\frac{max+d}{\beta})$ then 
$X'_d=[\frac{min+d}{\beta},\frac{min+d+1}{\beta})=[\frac{d}{\beta},\frac{d+1}{\beta})$, $d\in\{0,1,\ldots,\lceil\beta\rceil-2\}$
and the length of the interval $X'_d$ is equal to $\frac{1}{\beta}$, $d<\lceil\beta\rceil-1$.
\vspace{-0.3cm}
\begin{figure}
	\centering
		\includegraphics[width=0.49\textwidth]{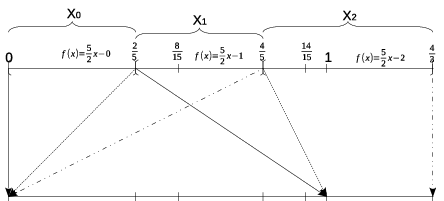}
		\includegraphics[width=0.49\textwidth]{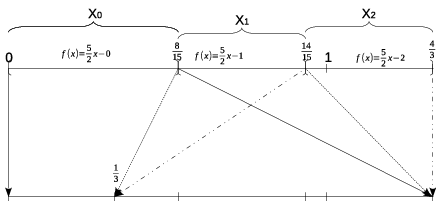}
				\caption{Deterministic greedy (on the left) and lazy (on the right) $\frac{5}{2}$-expansion PAM}
	\label{fig:pam5-2-2}
\end{figure}
\vspace{-0.4cm}
%
%
%
\begin{definition}
A function $f:[min,max)\rightarrow [min,max)$ is the lazy $\beta$-expansion PAM, if the domain $[min,max)$
is divided into intervals $X''_{d}$, $d\in\{0,1,\ldots,\lceil\beta\rceil-1\}$, 
such that $X''_{0}=X_{0}$, $X''_{d}=X_{d}-X_{d-1}$,  $d\in\{1,2,\ldots,\lceil\beta\rceil-1\}$, 
and $f(x)=\beta x-d$ iff $x\in X''_d$.
\end{definition}
%
%
%
\begin{proposition}
Let  $f$ and $g$ are {\sl greedy} and {\sl lazy} $\beta$-expansion PAM's respectively.  
$f$ and $g$ are (topologically) conjugate by
the homeomorphism  $h:$  $h(x)=h^{-1}(x)=max-x$, i.e. $f=h\circ g\circ h$.
\end{proposition}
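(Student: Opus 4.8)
The plan is to verify the conjugacy by a direct computation on each linear piece, the whole argument being driven by the single arithmetic identity $(\beta-1)\,max=\lceil\beta\rceil-1$, equivalently $\beta\cdot max = max+(\lceil\beta\rceil-1)$, which is just the closed form of $max$ recorded above. First I would observe that $h$ is an involution, $h(h(x))=max-(max-x)=x$, so $h=h^{-1}$ and the asserted $f=h\circ g\circ h$ is a genuine topological conjugacy; $h$ is a homeomorphism (an orientation-reversing affine bijection), modulo the usual half-open-interval caveat that $h$ sends $[min,max)$ to $(min,max]$, which I would handle by reading the statement on the open interval $(min,max)$ and noting that the two maps agree off the finitely many interval endpoints.

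Second I would pin down how $h$ moves the two partitions. Using $min=0$ one has $X'_d=[\tfrac{d}{\beta},\tfrac{d+1}{\beta})$ for $d<\lceil\beta\rceil-1$ and, after simplifying with $\beta\cdot max=max+(\lceil\beta\rceil-1)$, $X'_{\lceil\beta\rceil-1}=[\tfrac{\lceil\beta\rceil-1}{\beta},max)$; subtracting the overlaps in the lazy construction gives $X''_0=[0,\tfrac{max}{\beta})$ and $X''_d=[\tfrac{max+d-1}{\beta},\tfrac{max+d}{\beta})$ for $1\le d\le\lceil\beta\rceil-1$. Applying $h$ to the endpoints of $X'_d$ and again invoking $\beta\cdot max=max+(\lceil\beta\rceil-1)$ turns $\tfrac{d}{\beta}$ and $\tfrac{d+1}{\beta}$ into $\tfrac{max+(\lceil\beta\rceil-1-d)}{\beta}$ and $\tfrac{max+(\lceil\beta\rceil-2-d)}{\beta}$, which are exactly the two endpoints of $X''_{\lceil\beta\rceil-1-d}$. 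Hence $h(X'_d)=X''_{\lceil\beta\rceil-1-d}$ for every digit $d$ (once more up to which endpoint is included).

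Third comes the one-line computation on a piece: for $x\in X'_d$ we have $h(x)=max-x\in X''_{\lceil\beta\rceil-1-d}$, so $g(h(x))=\beta(max-x)-(\lceil\beta\rceil-1-d)=\big(max+(\lceil\beta\rceil-1)\big)-\beta x-(\lceil\beta\rceil-1-d)=max-\beta x+d$, and therefore $h(g(h(x)))=max-(max-\beta x+d)=\beta x-d=f(x)$. Letting $d$ range over $\{0,1,\ldots,\lceil\beta\rceil-1\}$ shows $f=h\circ g\circ h$ on all of $(min,max)$, and since $h$ is a homeomorphism this is the claimed conjugacy.

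The only real friction is the bookkeeping for the half-open intervals: because $h$ reverses orientation it carries a left-closed, right-open greedy cell onto a right-closed, left-open set, which coincides with the corresponding lazy cell only after swapping the included endpoint. This is immaterial for the dynamics, since orbits never depend on a measure-zero set of endpoints, but it is the one place where the statement must be read with a grain of salt; everything else is the identity $(\beta-1)\,max=\lceil\beta\rceil-1$ applied twice.
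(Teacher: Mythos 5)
Your proof is correct and follows essentially the same route as the paper: the paper's entire argument is the single observation $X'_{d}=max-X''_{\lceil\beta\rceil-1-d}$, which is precisely your second step, and your remaining computations (the involution property of $h$ and the piecewise check $h(g(h(x)))=\beta x-d$) just make explicit what the paper leaves implicit. Your remark about the half-open endpoints is a fair point that the paper silently glosses over.
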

  \begin{proof}
The statement holds since $X'_{d}=max-X''_{\lceil\beta\rceil-1-d}$, $d\in\{0,1,\ldots,\lceil\beta\rceil-1\}$ 
\end{proof}


We would like to highlight that the questions about
reachability as well as representation of numbers
in rational bases are tightly connected with questions about the density of orbits in PAMs. 
Moreover if the density of orbits are the same for all non-periodic points then 
it may be possible to have a wider application of 
$p$-adic techniques provided in the beginning of the paper. Let us formulate a hypothesis that 
goes along with our experimental simulations in PAMs:

\begin{maincond}\label{condA}
The orbit of any rational point in any expanding deterministic PAM is either finite or dense on the whole domain.
\end{maincond}

\begin{lemma}
Any $(0,1)-\beta$-expansion is greedy.
\end{lemma}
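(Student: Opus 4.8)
\emph{Proof proposal.} The plan is to run the orbit of the given number under the \emph{greedy} $\beta$-expansion PAM $g$ of the preceding definition and to check, step by step, that the symbolic itinerary $g$ produces is forced to coincide with the prescribed digit sequence. Throughout, $\beta>2$ is rational and non-integer and $w\colon\mathbb{N}\to\{0,1\}$ is a $(0,1)$-$\beta$-expansion of a number $x$, i.e.\ $x=\sum_{i=1}^{\infty}w(i)\beta^{-i}$; note that then $x\in[0,\tfrac{1}{\beta-1}]\subseteq[0,\max)$, so $x$ lies in the domain of $g$. First I would introduce the orbit $x_0=x$, $x_{k}=\beta x_{k-1}-w(k)$ and record, by a straightforward induction (exactly as in the second direction of the $\beta$-expansion PAM construction above), the identity $x_k=\sum_{j=1}^{\infty}w(k+j)\beta^{-j}$ for all $k\ge 0$. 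The goal is then to show $x_{k-1}\in X'_{w(k)}$ for every $k\ge 1$, since $g$ acts on $X'_d$ by $x\mapsto\beta x-d$.

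The crux — and the step where the hypothesis $\beta>2$ is used, rather than merely $\beta>1$ — is the two-sided estimate obtained from the above identity: $0\le x_k\le\sum_{j\ge 1}\beta^{-j}=\tfrac{1}{\beta-1}$, and since $\beta>2$ this gives $x_k\in[0,1)$ for every $k$. This is precisely the point at which the statement would fail for $\beta\le 2$ (for $\beta=2$ the lazy binary expansion of a dyadic rational has a tail equal to $1$ and is not greedy), so I would emphasise it as the one genuinely load-bearing inequality in the argument.

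Next I would rewrite the recursion as $x_{k-1}=\tfrac{w(k)+x_k}{\beta}$ and locate $x_{k-1}$ among the greedy intervals. Because $\beta>2$ forces $\lceil\beta\rceil-2\ge 1$, the digit $w(k)\in\{0,1\}$ always lies in $\{0,1,\dots,\lceil\beta\rceil-2\}$, hence by the computation in the text $X'_{w(k)}=\bigl[\tfrac{w(k)}{\beta},\tfrac{w(k)+1}{\beta}\bigr)$ — in particular the digits $0,1$ never reach the exceptional top interval $X'_{\lceil\beta\rceil-1}$, which is why the bookkeeping is clean. Combining $x_k\in[0,1)$ with $x_{k-1}=\tfrac{w(k)+x_k}{\beta}$ gives $w(k)+x_k\in[w(k),w(k)+1)$, i.e.\ $x_{k-1}\in X'_{w(k)}$ (the left endpoint case $x_k=0$ causes no trouble since the $X'_d$ are left-closed, and $x_{k-1}\le\tfrac{1}{\beta-1}<\max$ keeps the orbit in the domain). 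Finally I would close by induction on $k$: $x_0=x=g^0(x)$, and if $x_{k-1}=g^{k-1}(x)$ then, since $x_{k-1}\in X'_{w(k)}$, the definition of $g$ yields $g^{k}(x)=g(x_{k-1})=\beta x_{k-1}-w(k)=x_k$; thus for every $k$ the point $g^{k-1}(x)$ falls in $X'_{w(k)}$, so the digit $g$ emits at step $k$ is exactly $w(k)$, which is the assertion that $w$ is the greedy $\beta$-expansion of $x$. I do not expect a real obstacle beyond the estimate of the second paragraph and the routine verification that digits $0,1$ only ever hit the uniform intervals $[\tfrac{d}{\beta},\tfrac{d+1}{\beta})$.
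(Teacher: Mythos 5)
Your proof is correct, but it takes a genuinely different route from the paper's. The paper argues by contradiction on the dynamics of the nondeterministic $\beta$-expansion PAM: if a $(0,1)$-orbit ever deviates from the greedy choice (picking digit $0$ on the overlap $X_0\cap X_1$), the image lands in $[1,\max)$, which is disjoint from $X_0$; since $\beta x-1>x$ whenever $x>\frac{1}{\beta-1}$, the orbit then increases monotonically inside $X_1$ and must eventually escape into some $X_d$ with $d>1$, forcing a digit $\ge 2$ and contradicting the $(0,1)$ assumption. You instead give a direct verification: the tails $x_k=\sum_{j\ge1}w(k+j)\beta^{-j}$ satisfy $0\le x_k\le\frac{1}{\beta-1}<1$ precisely because the digits are at most $1$ and $\beta>2$, whence $x_{k-1}=\frac{w(k)+x_k}{\beta}$ lies in the greedy interval $X'_{w(k)}=\bigl[\frac{w(k)}{\beta},\frac{w(k)+1}{\beta}\bigr)$ and the greedy PAM reproduces $w$ step by step. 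Your version is shorter, avoids the asymptotic escape argument, and isolates the single load-bearing inequality $\frac{1}{\beta-1}<1$ (i.e.\ $\beta>2$), which the paper leaves implicit; the paper's version, on the other hand, explains dynamically \emph{why} a non-greedy choice fails (the orbit is pushed monotonically upward out of $X_1$), which is more in the spirit of the orbit-based analysis running through the rest of the section. Both arguments are sound; yours also handles uniformly any deviation point, whereas the paper's write-up only treats deviation on $X_0\cap X_1$ explicitly.
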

\begin{proof}
Let $f$ be a $\beta$-expansion PAM. Assume that there is a point $x$ and the orbit
$\x(i)$, where $\x(0)=x$, $\x(i+1)=\beta \x(i)-d_i$ in the map $f$ such that 
$d_i\in \{0,1\}$ for all $i \in \mathbb{N}$ and the orbit does not correspond to the $\beta$ greedy expansion of
$x$. 

The intersection of intervals $X_0$ and $X_1$ is an interval $X_{01}=[\frac{\min+1}{\beta},\frac{\max}{\beta})$.
Applying a map $y=\beta x$ to $X_{01}$ we see that  $X_{01}$ is 
scaled into $[\min +1, \max)=[1,\frac{\lceil\beta\rceil-1}{\beta-1})$. 
The interval $[1,\frac{\lceil\beta\rceil-1}{\beta-1})$ does not have any common points with $X_0$
as the point $1$ lies on the right side of the left border of the interval 
$X_2=[\frac{\min+2}{\beta},\frac{\max+2}{\beta})$ and by Lemma~\ref{lemma_intervals} 
 $X_{i}\cap X_{i+2}=\emptyset$. 

Note that when $x>\frac{1}{\beta-1}$ we have $\beta x-1 >x$. Let us assume that for some $i$
$\x(i)\in X_{01}$ and $\x(i+1)=\beta \x(i)-0$, i.e. we did not followed a greedy expansion
and therefore $\x(i+1)\in [1,\frac{\lceil\beta\rceil-1}{\beta-1})$.
Then $\x(i+2)=\beta \x(i+1)-1>\x(i+1)$ and $\x(i+2)\notin X_0$, etc.
In this case starting from $\x(i+1)$ there is 
monotonically increasing sequence of orbital points in the interval $X_1$.
So points in such orbit should eventually leave the interval 
$X_1$ and reach $X_d$, where $d>1$. This gives us a contradiction with the original assumption. 
\qed\end{proof}

\begin{corollary}
Since the greedy expansion can be expressed by a deterministic map then $(0,1)-\beta$-expansion 
is  unique and greedy.
\end{corollary}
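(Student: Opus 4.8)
The plan is to combine the preceding Lemma with the fact that the greedy $\beta$-expansion PAM is an honest function. First I would recall that, given a point $x\in[min,max)$, the greedy $\beta$-expansion PAM $g:[min,max)\to[min,max)$ is deterministic, so the orbit $\x(0)=x$, $\x(i+1)=g(\x(i))$ is uniquely determined, and with it the digit sequence $w(i)=d_i$ defined by $\x(i)\in X'_{d_i}$. Thus the greedy $\beta$-expansion of $x$ is a single, well-defined word; there is exactly one of them.

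Next I would invoke the Lemma: any $(0,1)$-$\beta$-expansion of $x$ — that is, any orbit of the \emph{non-deterministic} $\beta$-expansion PAM from $x$ all of whose digits lie in $\{0,1\}$ — must already follow the greedy expansion, in the sense that at each step the selected digit coincides with the one prescribed by $g$. Consequently such an orbit is precisely the greedy orbit of $x$, and the associated word is exactly the greedy word. Since the greedy word is unique by the previous paragraph, there cannot be two distinct $(0,1)$-$\beta$-expansions of the same point; hence whenever a $(0,1)$-$\beta$-expansion exists it is unique and coincides with the greedy one.

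The only point requiring a little care is the overlap region $X_{01}=X_0\cap X_1$, where a priori the non-deterministic map offers a genuine choice of digit $0$ or $1$. Here I would lean on exactly the argument already used in the Lemma: choosing $0$ on $X_{01}$ pushes the orbit into $[1,\tfrac{\lceil\beta\rceil-1}{\beta-1})$, from which it increases monotonically inside $X_1$ and must eventually exit $X_1$ through a digit $d>1$, contradicting the assumption that all digits stay in $\{0,1\}$. So on $X_{01}$ the digit is forced to be $1$, which is precisely the greedy choice, and determinism of $g$ takes over from there. I do not expect a serious obstacle: the substantive content is entirely in the Lemma, and the corollary is essentially the observation that a deterministic orbit produces a unique digit sequence.
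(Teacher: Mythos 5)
Your proposal is correct and follows exactly the route the paper intends: the lemma forces any $(0,1)$-$\beta$-expansion to coincide with the greedy one, and determinism of the greedy $\beta$-expansion PAM makes that orbit (hence its digit sequence) unique. The re-derivation of the overlap-region argument on $X_{01}$ is harmless but redundant, since the lemma already supplies it.
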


\begin{theorem}
If Hypothesis~\ref{condA} holds then
a non-periodic $(0,1)-\beta$-expansion does not exist.
\end{theorem}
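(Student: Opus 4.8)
The plan is to transport the problem to the greedy $\beta$-expansion PAM and then apply Hypothesis~\ref{condA}. Let $x\in[0,1]$ be a rational number (the setting of Hypothesis~\ref{condA}) and suppose, towards a contradiction, that $w:\mathbb{N}\to\{0,1\}$ is a $(0,1)$-$\beta$-expansion of $x$, i.e.\ $x=\sum_{i=1}^{\infty}w(i)\beta^{-i}$, that is \emph{not} eventually periodic. I want to derive a contradiction.

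First I would record the only estimate needed. For every $n\ge 0$ the tail $x_n:=\sum_{i=1}^{\infty}w(n+i)\beta^{-i}$ satisfies $0\le x_n\le\sum_{i=1}^{\infty}\beta^{-i}=\frac{1}{\beta-1}$, and since $\beta>2$ we get $\frac{1}{\beta-1}<\frac{2}{\beta}$; moreover $\frac{2}{\beta}<\frac{2}{\beta-1}\le\frac{\lceil\beta\rceil-1}{\beta-1}=\max$, because $\lceil\beta\rceil-1\ge 2$. Hence the set $\{x_n:n\ge 0\}$ is contained in $[0,\frac{2}{\beta})$, which is a proper subinterval of the domain $[\min,\max)=[0,\max)$; in particular this set never meets the nonempty interval $[\frac{2}{\beta},\max)$.

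Next I would identify $\{x_n:n\ge 0\}$ with a genuine PAM orbit. By the lemma above that every $(0,1)$-$\beta$-expansion is greedy, together with the uniqueness of the greedy expansion noted in the following corollary, the sequence $w$ is precisely the symbolic sequence read off the orbit of $x$ in the greedy $\beta$-expansion PAM $f$, namely $x_0=x$ and $x_{n+1}=f(x_n)=\beta x_n-w(n+1)$. Since $f'\equiv\beta>1$, this $f$ is a deterministic \emph{expanding} PAM, so Hypothesis~\ref{condA} applies to the rational point $x$: the orbit $O_f(x)=\{x_n:n\ge 0\}$ is either finite or dense in $[0,\max)$. By the previous paragraph it is not dense, hence it is finite; and a finite forward orbit of a deterministic map is eventually periodic, so $w$ is eventually periodic — contradicting the choice of $w$. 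This shows that under Hypothesis~\ref{condA} no non-periodic $(0,1)$-$\beta$-expansion can exist.

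The one genuinely load-bearing step is the reduction from the non-deterministic $\beta$-expansion PAM to the deterministic greedy PAM, since Hypothesis~\ref{condA} is asserted only for deterministic expanding maps; this is exactly supplied by the preceding lemma (every $(0,1)$-$\beta$-expansion is greedy) and the uniqueness corollary. Everything else is the one-line bound $\frac{1}{\beta-1}<\max$ for a non-integer $\beta>2$ and the standard pigeonhole fact that a finite orbit of a deterministic map is eventually periodic.
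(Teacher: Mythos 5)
Your proposal is correct and follows essentially the same route as the paper: identify the $(0,1)$-$\beta$-expansion with the orbit of $x$ under the deterministic expanding greedy $\beta$-expansion PAM (via the lemma that every $(0,1)$-expansion is greedy), then invoke Hypothesis~\ref{condA} to conclude the orbit is finite or dense and rule out density. The only cosmetic difference is that you make the non-density step quantitative (the orbit is confined to $[0,\tfrac{1}{\beta-1}]$, a proper subinterval of $[0,\max)$), whereas the paper argues in the contrapositive that a dense orbit would meet intervals with digits $d\ge 2$.
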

\begin{proof}
Any $(0,1)-\beta$-expansion can be constructed by expanding \footnote{I.e. with linear coefficients that are greater than $1$} deterministic greedy $\beta$-expansion PAM.
If the orbit of a rational point in greedy $\beta$-expansion PAM is non-periodic, then by  Hypothesis~\ref{condA}
it should be dense and therefore should intersect all intervals and cannot provide 
$(0,1)-\beta$-expansion. \qed
\end{proof}

\begin{theorem}
If Hypothesis~\ref{condA} holds then for any rational number its deterministic $\beta$-expansion is either
eventually periodic or it contains all possible patterns (finite subsequences of digits)  from $\{0,1,\ldots,\lceil\beta\rceil-1\}$.
\end{theorem}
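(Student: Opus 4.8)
The plan is to realize the deterministic $\beta$-expansion of a rational $x$ as the symbolic itinerary of the orbit of $x$ under the greedy $\beta$-expansion PAM $f$ (the lazy case will then follow from the earlier proposition stating that the greedy and lazy $\beta$-expansion PAMs are conjugate), and to apply Hypothesis~\ref{condA} to $f$. Since every linear coefficient of $f$ equals $\beta>1$, the map $f$ is an expanding deterministic PAM on $[min,max)$, so Hypothesis~\ref{condA} gives that the orbit $O_f(x)$ is either finite or dense in $[min,max)$. The digit sequence of the expansion is precisely the itinerary of this orbit with respect to the partition $[min,max)=\bigcup_d X'_d$, the $i$-th digit recording which cell $X'_d$ contains the $i$-th point of the orbit.

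First I would dispatch the finite case. If $O_f(x)$ is finite, then $x,f(x),f^2(x),\ldots$ takes only finitely many values, so by the pigeonhole principle $f^i(x)=f^j(x)$ for some $i<j$; since $f$ is a deterministic map this forces $f^{i+n}(x)=f^{j+n}(x)$ for every $n\ge 0$, so the orbit sequence, and hence its itinerary, is eventually periodic. This is exactly the first alternative in the statement.

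For the dense case I would use the dictionary between digit patterns and cylinder intervals. For a finite word $w=d_1d_2\ldots d_k$ over $\{0,1,\ldots,\lceil\beta\rceil-1\}$ set $C_w=X'_{d_1}\cap f^{-1}(X'_{d_2})\cap\cdots\cap f^{-(k-1)}(X'_{d_k})$; then $w$ occurs as a factor of the expansion of a point $z$ starting at a given position precisely when the corresponding orbit point of $z$ lies in $C_w$. Each $X'_d$ is a half-open interval $[\cdot,\cdot)$ and the restriction of $f$ to each piece is an increasing affine bijection onto its image, so $f^{-1}$ sends half-open intervals to half-open intervals and $C_w$ is again a (possibly empty) half-open interval $[\ell,r)$; in particular $C_w$ is nonempty if and only if $\ell<r$, that is, if and only if it has nonempty interior. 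Now let $w$ be any pattern that is realizable at all, meaning $C_w\neq\emptyset$ (equivalently, $w$ occurs in the expansion of some point of $[min,max)$). Then $C_w$ contains a nonempty open interval, and since $O_f(x)$ is dense it meets that interval, so some orbit point of $x$ lies in $C_w$ and $w$ occurs in the expansion of $x$. Thus a deterministic $\beta$-expansion that is not eventually periodic contains every realizable pattern. The lazy case reduces to the greedy one via the affine homeomorphism $h(x)=max-x$ of the conjugacy proposition: $h$ maps intervals to intervals and dense sets to dense sets and sends the rational point $x$ to the rational point $max-x$, and under $X'_d=max-X''_{\lceil\beta\rceil-1-d}$ the greedy digit $d$ corresponds to the lazy digit $\lceil\beta\rceil-1-d$, so the conclusion transfers.

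The step I expect to be the real obstacle is the last part of the dense case: turning the purely topological statement ``the orbit is dense'' into the combinatorial statement ``every realizable word occurs'', and, inside that, verifying that cylinders of realizable words are genuinely fat (have nonempty interior rather than degenerating to a single boundary point). This is precisely where the half-open form of the intervals $X'_d$ and the monotone affine structure of $f$ on each piece are used; it also forces a mild rereading of ``all possible patterns'' as ``all patterns that occur in some expansion'', since for non-integer $\beta$ not every word over $\{0,1,\ldots,\lceil\beta\rceil-1\}$ is admissible for the greedy map.
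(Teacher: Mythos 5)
Your proof is correct and follows the same basic route as the paper: Hypothesis~1 gives finite-or-dense, the finite case yields eventual periodicity by pigeonhole, and density yields occurrence of patterns via the itinerary correspondence. The paper's own proof, however, is a single sentence asserting that a dense orbit ``is visiting all intervals,'' which only directly accounts for patterns of length one; your cylinder-set argument --- showing that $C_w = X'_{d_1}\cap f^{-1}(X'_{d_2})\cap\cdots\cap f^{-(k-1)}(X'_{d_k})$ is a half-open interval with nonempty interior whenever the word $w$ is realizable, so that a dense orbit must enter it --- is exactly the step needed to handle longer patterns, and your rereading of ``all possible patterns'' as ``all realizable patterns'' is also necessary, since for non-integer $\beta$ not every word over $\{0,1,\ldots,\lceil\beta\rceil-1\}$ is admissible for the deterministic greedy (or lazy) map. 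In effect you have written out a complete proof of a statement the paper dismisses as obvious, without deviating from its intended argument.
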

\begin{proof} 
The statement is obvious as Hypothesis~\ref{condA} implies that the orbit is either periodic or it is dense and the 
dense orbit is visiting all intervals. \qed 
\end{proof}

It looks that the point-to-interval problem is harder than the point-to-point reachability problem for the expanding PAMs, 
as for example  Theorem~\ref{th2} gives an algorithm 
for the point-to-point reachability problem in the $\beta$-expansion PAMs,  but not for the point-to-interval reachability that is equivalent to
 the $\beta$-expansion problem.

Note that in the $\beta$-expansion PAMs all linear coefficients are the same, so the density of the orbit correspond to the 
density of the following sequence $\x(n)=f^n(x_0)$, where $f(x)=\{\beta  x\}$. 
For example when $\beta = \frac{5}{2}$ and $x_0=1$ we get the sequence:
\begin{center}
 $\{\frac{5}{2}\}, \{ \frac{5}{2} \{\frac{5}{2}\} \},  \{ \frac{5}{2}  \{ \frac{5}{2} \{\frac{5}{2}\} \} \}, \ldots$ 
\end{center}
The question about the distribution of a similar sequence
 $\{\frac{3}{2}\}$,  $\{\frac{3^2}{2^2}\}$,   $\{\frac{3^3}{2^3}\}$, ...
, where the integer part is removed once after 
taking a power of a fraction (for example 3/2) is known as Mahler's $3/2$ problem, that is
a long standing open problem in analytic number theory.

\section{Density of orbits and its geometric interpretation}

It is well known that $\x(n)=\{\alpha n\}$, where $\alpha$ is an irrational number,
has an uniform distribution. Let us give some geometric interpretation of the orbit density.
Consider the Cartesian plane with the y-axis  $x$ and and the x-axis $y$ (just swapping their places).
Now let us divide the set of lines $x = n$, $n \in \Nat$, by integer points on the segments of the unit length.
The set of points $(y,x)$, where $m \le y <m + 1$, $x = n$, i.e. the interval $[m, m + 1 ) \times n$ 
on the line $x = n ,$ will be denoted by $S_{m,n}$, $m\in\Zed^+$, $n \in \Nat$.  In other words, $S_{m,n} = (m + [0,1)) \times n$.
Let $I$ be an interval such that $I \subseteq [0,1)$ and by $I_{m, n} $  let us denote 
the set $(m + I) \times n$.

\begin{figure}
	\centering
        \includegraphics[scale=0.31]{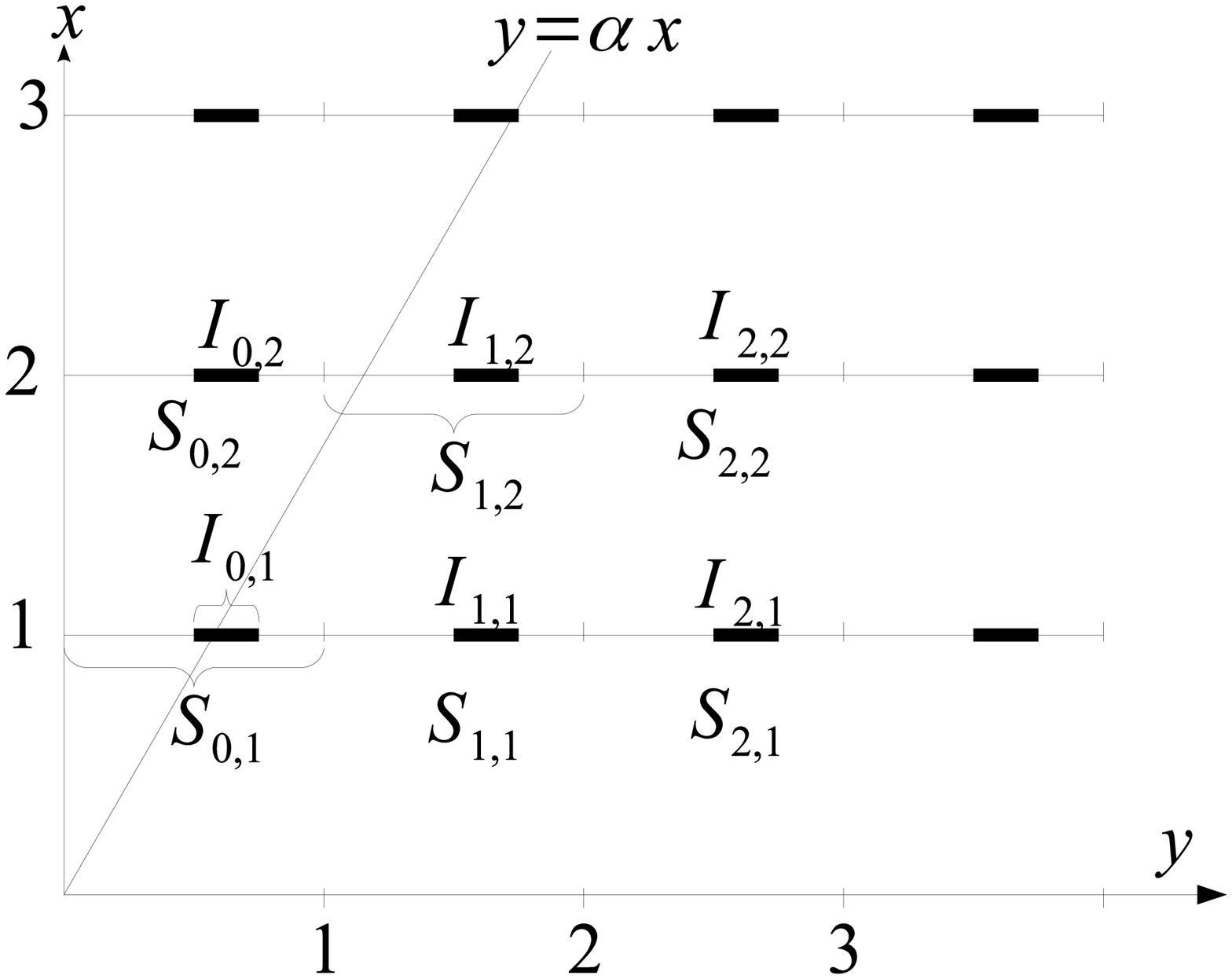}
  	\includegraphics[scale=0.31]{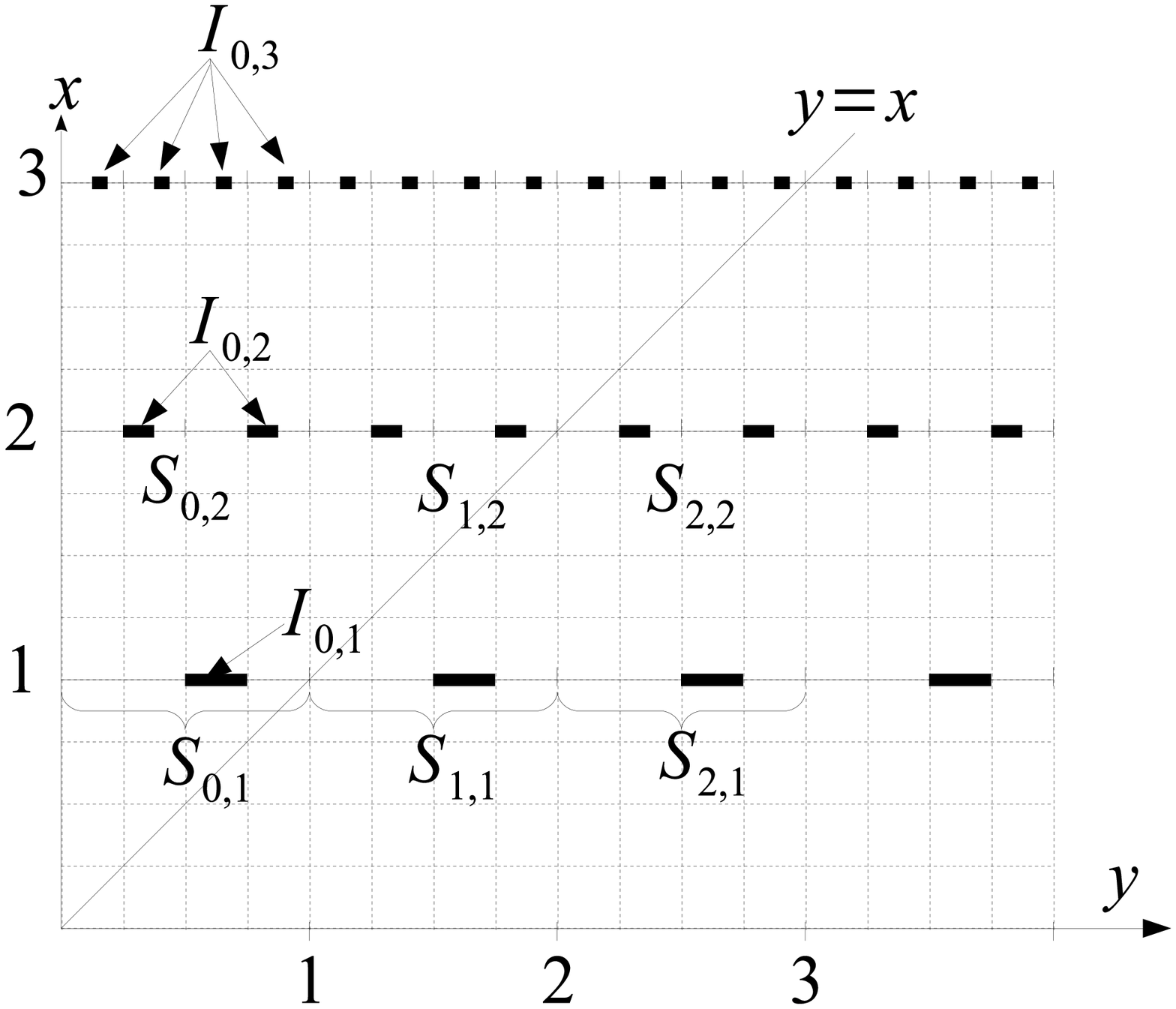}
				\caption{Left: An example for two sets $S_{m,n}$ and $I_{m,n}$; Right: A dynamic interval $I(n)$.}
	\label{fig:din1}
\end{figure}

Two points of the plane are defined to be equivalent if they belong to a same line passing through the origin.
We call $\alpha$ as homogeneous coordinate of a point $(y,x)$ if $y = \alpha x$.
By $H(I)$ we denote the set of homogeneous coordinates of all points from $\bigcup \limits_ {m\in\Zed^+, n \in \Nat} {I_{m, n}} $.
The sequence $\x (n) = \{\alpha n \} $ is dense in $[0,1)$ if and only if for any interval $I \subseteq [0,1)$ 
there are $m$ and $n$, such that the line $ y = \alpha  x $ intersects the set $ I_{m, n}$. 
It is known that $[0,1)-H(I) \subseteq \Qu$ for any interval $I \subseteq [0,1)$, 
i.e.  for any irrational $\alpha>0$ the line $y = \alpha x $ intersect the set $ \bigcup \limits_ {m\in\Zed^+, n \in \Nat} {I_ {m, n}}$. 
Moreover in the case of irrational factors it is known that the frequency of occurrence of  $\x (n) = \{\alpha n \}$ 
in the interval $I$ is equal to its length.

In some sense the interval $I$, in the above example, can be named as {\sl static} because it does not change in time
$n$. However in order to study and describe previously mentioned problems such as 
the target discounted-sum problem,  PAMs reachability problems, 
the Mahler's $3/2$ problem we require the notion of ``dynamic intervals``.

Let $\x$  be a sequence of numbers from $[0,1)$. What is the distribution of a sequence $\x '(n) = \left \{p^ {\bk (n)} \x (n) \right \}$, where $\mathbf {k}: \mathbb {Z^+} \rightarrow \mathbb {Z ^ +}$ is a non-decreasing sequence? 
For example, if $\bk (n) = n-1$ and  the number $\x (n) $ has in the base $p$ the following form
$\x(n)=0.a_{n1}a_{n2}\ldots a_{nn}a_{n,n+1}\ldots $,
then $\x'(n)=0.a_{nn}a_{n,n+1}\ldots$.


%

Let us assume that $I\subseteq S^1$ and $\mathbf{k}:\mathbb{Z^+}\rightarrow \mathbb{Z^+}$  is a non-decreasing sequence,
 $p\in\mathbb{N}$. Now we define ``dynamical intervals'' as an 
evolving infinite sequence  
$\mathbf{I}(1)$, $\mathbf{I}(2)$, $\mathbf{I}(3)$, $\ldots$ :
$$\mathbf{I}(1)=I, 
\mathbf{I}(n)=
\bigcup_{j=0}^{p^{\mathbf{k}(n)}-1}{\frac{I+j}{p^{\mathbf{k}(n)}}}.$$

By $F^{\mathbf{x}}_{\mathbf{I}}(n)=\left| \{i\in\mathbb{Z^+}|i\le n, \mathbf{x}(i)\in \mathbf{I}(i)\}\right|$  
we denote a function representing 
a frequency of hitting dynamical interval $\mathbf{I}$ by the sequence $\mathbf{x}$.
In contrast to $F^{\mathbf {x}}_{I}(n)$ which only 
counts the number of hittings to a fixed interval $I$, our new function $F^{\mathbf{x}}_{\mathbf{I}}$ counts 
the number of  hittings when both points and intervals are changing in time.

\begin{proposition}
The following equation holds: $F^{\mathbf{x'}}_{I}(n)=F^{\mathbf{x}}_{\mathbf{I}}(n)$.
\end{proposition}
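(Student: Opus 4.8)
The plan is to reduce the claimed identity between the two counting functions to a single pointwise equivalence, valid for every index $i$:
\[
\mathbf{x}'(i)\in I \quad\Longleftrightarrow\quad \mathbf{x}(i)\in\mathbf{I}(i).
\]
Once this is available, the sets $\{i\le n:\mathbf{x}'(i)\in I\}$ and $\{i\le n:\mathbf{x}(i)\in\mathbf{I}(i)\}$ coincide term by term, so their cardinalities agree and $F^{\mathbf{x'}}_{I}(n)=F^{\mathbf{x}}_{\mathbf{I}}(n)$ follows at once. Thus all the content is in the pointwise statement.

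To prove the forward direction, fix $i$ and abbreviate $k=\mathbf{k}(i)$ and $x=\mathbf{x}(i)\in[0,1)$, so that $\mathbf{x}'(i)=\{p^{k}x\}$. Since $x\in[0,1)$ we have $p^{k}x\in[0,p^{k})$, hence $\lfloor p^{k}x\rfloor=j_{0}$ for a (unique) integer $j_{0}\in\{0,1,\dots,p^{k}-1\}$, and $\mathbf{x}'(i)=p^{k}x-j_{0}$. Therefore $\mathbf{x}'(i)\in I$ is equivalent to $p^{k}x\in I+j_{0}$, i.e. to $x\in\frac{I+j_{0}}{p^{k}}$, and this last membership forces $x\in\mathbf{I}(i)=\bigcup_{j=0}^{p^{k}-1}\frac{I+j}{p^{k}}$.

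For the converse I would exploit the hypothesis $I\subseteq S^{1}=[0,1)$: it gives $\frac{I+j}{p^{k}}\subseteq\bigl[\tfrac{j}{p^{k}},\tfrac{j+1}{p^{k}}\bigr)$ for each $j\in\{0,\dots,p^{k}-1\}$, so the translated-and-scaled copies of $I$ making up $\mathbf{I}(i)$ are pairwise disjoint. Consequently, if $x\in\mathbf{I}(i)$ then $x\in\frac{I+j}{p^{k}}$ for exactly one $j$, and since that forces $p^{k}x\in[j,j+1)$ we must have $j=\lfloor p^{k}x\rfloor=j_{0}$; hence $\mathbf{x}'(i)=p^{k}x-j_{0}\in I$. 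This establishes the equivalence for every $i$ and completes the argument.

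The only step requiring genuine (if modest) care is the disjointness of the pieces $\frac{I+j}{p^{\mathbf{k}(i)}}$, which is precisely where $I\subseteq[0,1)$ is used: without it the index $j$ realizing $x\in\frac{I+j}{p^{k}}$ need not be unique nor equal to $\lfloor p^{k}x\rfloor$, and the two counting functions could legitimately disagree. Everything else is a direct unfolding of the definitions of $\mathbf{x}'$ and of the dynamical intervals $\mathbf{I}(n)$.
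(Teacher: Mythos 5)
Your proof is correct. The paper actually states this proposition without any proof at all, treating it as an immediate consequence of the definitions; your pointwise equivalence $\mathbf{x}'(i)\in I \Leftrightarrow \mathbf{x}(i)\in\mathbf{I}(i)$, with the observation that $I\subseteq[0,1)$ makes the pieces $\frac{I+j}{p^{\mathbf{k}(i)}}$ pairwise disjoint and pins down $j=\lfloor p^{\mathbf{k}(i)}\mathbf{x}(i)\rfloor$, is exactly the argument the authors leave implicit, so you have supplied the missing details rather than diverged from them.
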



The phenomenon that significant digit distribution in real data are not accruing randomly known as Benford's Law.
For example the sequence $p^1$, $p^2$, $p^3$,.. satisfies Benford's Law, under the condition that $\log_{10} p$ 
is an irrational number, which is a consequence of the {\sl Equidistribution theorem} (proved separetly  by  Weyl, Sierpinski and Bohl). 
The Equidistribution theorem states that the sequence
$\{\alpha \}$, $\{2\alpha \}$, $\{3\alpha \}$, $\ldots$
is uniformly distributed on the circle    $\mathbb {R} / \mathbb {Z}$ , when $a$ is an irrational number.
It gives us the fact that each significant digit of numbers in ($p^n$) sequence will correspond to the interval $\mathbb {R} / \mathbb {Z}$ 
and the length of the interval related to the frequency for each appearing digit.

However the question about the distribution of the sequence $\{(3/2)^n\}$  is different in the way that
it is not about the distribution of the first digits of $3^n$ in base $2$, i.e.  not about the distribution of the sequence $\frac{3^n}{2^{\lceil n\log_2{3}\rceil}}$, but related to the sequence of digits
after some shift of the number  $\frac{3^n}{2^{\lceil n\log_2{3}\rceil}}$ corresponding to the multiplication by a power of $2$. 

So in the above notations the distribution of numbers in the sequence $\mathbf{x'}(n) =\{(3/2)^n\}$ corresponds to the 
$F^{\mathbf{x'}}_{I}(n)$ for the logarithmic (Benford's law) distributed sequence
$\x(n)=\frac{3^n}{2^{\lceil n\log_2{3}\rceil}}$, $p=2$ and $\bk(n)={\lceil n\log_2{3}\rceil-n}$.

Now we will show that even if the sequence $\{\alpha \}$, $\{2\alpha \}$, $\{3\alpha \}$, $\ldots$
is uniformly distributed on the circle $\mathbb {R} / \mathbb {Z}$,
the irrationality of $\alpha$ is not enough to guarantee uniform distribution or even density 
of the sequence $\x'(n)$ on the circle corresponding to the linear shifts $\bk(n)=n$.

\begin{theorem}
Let us define  $\alpha=\sum_{i=1}^{\infty}{\frac{1}{2^{\Delta_i}}}$ where
 $\Delta_1=1$, $\Delta_{i+1}=2^{\Delta_i}+\Delta_i$, $i\geq 1$ (http://oeis.org/A034797). 
Then for all $n\in \mathbb{N}\cup\{0\}$ 
the sequence $\{2^nn\alpha\}$ is not dense in the interval $[0,1]$
and $\{2^nn\alpha\}<\frac{1}{2}$.
\end{theorem}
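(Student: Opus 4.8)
The idea is to read $\{2^n n\alpha\}$ off the extremely lacunary binary expansion of $\alpha$, whose ``$1$''-digits occupy exactly the positions $\Delta_1<\Delta_2<\ldots$. The case $n=0$ is immediate, so fix $n\geq 1$ and write $\alpha=\sum_{i\geq 1}2^{-\Delta_i}$. Multiplying by $2^n n$, every term with $\Delta_i\leq n$ contributes an integer and so may be dropped modulo $1$, leaving
\[
\{2^n n\alpha\}=\Bigl\{\sum_{\Delta_i>n}\frac{n}{2^{\Delta_i-n}}\Bigr\}.
\]
Let $j=j(n)$ be the least index with $\Delta_j>n$; since $\Delta_1=1\leq n$ we have $j\geq 2$ and $\Delta_{j-1}\leq n<\Delta_j=2^{\Delta_{j-1}}+\Delta_{j-1}$. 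Set $t=\Delta_j-n$, so $1\leq t\leq 2^{\Delta_{j-1}}$ and the quantity above equals $\{n/2^{t}+R\}$ with $R=\sum_{i>j}n/2^{\Delta_i-n}>0$.

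First I would show that the tail is negligible, $R<2^{-t}$. This is exactly where the super-exponential growth $\Delta_{i+1}-\Delta_i=2^{\Delta_i}$ is used: each gap after $\Delta_j$ is at least $2^{\Delta_j}$, so $\Delta_{j+\ell}-n\geq t+\ell\,2^{\Delta_j}$, and summing the resulting geometric series gives $R\leq n\,2^{\,1-t-2^{\Delta_j}}$; since $n<\Delta_j\leq 2^{\Delta_j-1}$ and $\Delta_j<2^{\Delta_j}$, this is $<2^{-t}$.

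The heart of the argument is the congruence bound $n\bmod 2^{t}\leq 2^{t-1}-1$, equivalently $\{n/2^{t}\}\leq\tfrac12-2^{-t}$; together with $0<R<2^{-t}$ and the absence of a carry it gives $\{2^n n\alpha\}=\{n/2^{t}\}+R<\tfrac12$, so the sequence lies entirely in $[0,\tfrac12)$ and hence is not dense in $[0,1]$. I expect this congruence to be the main obstacle, and I would prove it through a ``stabilization lemma'': for every $k\geq 1$ all terms $\Delta_i\geq k$ are congruent modulo $2^k$ to $D_k$, the least term of the sequence that is $\geq k$ (because $\Delta_i\geq k$ forces $2^{\Delta_i}\equiv 0\pmod{2^k}$), and moreover $k\leq D_k<2^{k}$ (writing $D_k=2^{\Delta_{m-1}}+\Delta_{m-1}$ with $\Delta_{m-1}\leq k-1$). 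Then I split according to $t$ versus $\Delta_{j-1}$. If $t>\Delta_{j-1}$, then $n=2^{\Delta_{j-1}}+\Delta_{j-1}-t<2^{\Delta_{j-1}}\leq 2^{t-1}$, so $n\bmod 2^{t}=n<2^{t-1}$. If $t\leq\Delta_{j-1}$, then $2^{t}\mid 2^{\Delta_{j-1}}$ gives $n\equiv \Delta_{j-1}-t\equiv D_t-t\pmod{2^{t}}$ with $0\leq D_t-t<2^{t}$, hence $n\bmod 2^{t}=D_t-t$, and $D_t-t$ is either $0$ or equal to $2^{\Delta_{m-1}}+\Delta_{m-1}-t<2^{\Delta_{m-1}}\leq 2^{t-1}$. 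In all cases $n\bmod 2^{t}\leq 2^{t-1}-1$, which finishes the proof. The two geometric-series estimates and the small cases ($n=0,1$) are routine; the only genuinely delicate point is the bookkeeping in the stabilization lemma, in particular the inequality $D_k<2^{k}$.
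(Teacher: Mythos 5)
Your proposal is correct, and it reaches the bound $\{2^n n\alpha\}<\tfrac12$ by a route that is organized quite differently from the paper's, even though the underlying case split turns out to be the same. The paper first replaces $\alpha$ by the partial sum $\sum_{j\le i}2^{-\Delta_j}$ (the same tail estimate you prove as $R<2^{-t}$), and then proceeds by \emph{induction} on $i$: for $\Delta_{k-1}\le n\le\Delta_k$ it isolates the dominant term $2^n n 2^{-\Delta_k}$ and splits on whether $n\le\Delta_k-\Delta_{k-1}-1$ or $n\ge\Delta_k-\Delta_{k-1}$ --- precisely your dichotomy $t>\Delta_{j-1}$ versus $t\le\Delta_{j-1}$ with $t=\Delta_k-n$. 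In the first case the paper, like you, shows the value itself is below $\tfrac12-2^{-\Delta_{k-1}-1}$; in the second case it uses the identity $\{2^n n 2^{-\Delta_k}\}=\{2^m m 2^{-\Delta_{k-1}}\}$ with $m=n-2^{\Delta_{k-1}}$ and appeals to the induction hypothesis. You avoid the induction entirely by proving the $2$-adic stabilization lemma ($\Delta_i\equiv D_k\pmod{2^k}$ for all $\Delta_i\ge k$, with $k\le D_k<2^k$), which yields the closed form $n\bmod 2^t=D_t-t$ and hence the explicit congruence bound $n\bmod 2^t\le 2^{t-1}-1$. What your version buys is a direct, quantitative description of where $\{2^n n\alpha\}$ lands (its leading binary digits are those of $(D_t-t)/2^t$), at the cost of the extra bookkeeping in the stabilization lemma; the paper's induction buys a shorter argument that makes the self-similarity of the orbit under $n\mapsto n-2^{\Delta_{k-1}}$ more visible. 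I checked the delicate points you flagged --- the inequality $D_k<2^k$ (from $D_k\le 2^{k-1}+k-1$), the tail bound $R<2^{-t}$ (using $n<\Delta_j\le 2^{\Delta_j-1}$), and the no-carry step --- and they all hold, so I see no gap.
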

\begin{proof}
Let us consider a sequence $\Delta$, which initial elements are 
$\Delta_1=1$, $\Delta_2=3$, $\Delta_3=11$, $\Delta_4=2059$ etc.

{
\[
\begin{array}{cccccccccccccccc}
       &  &   &\Delta_1&&\Delta_2&&&&&&&&\Delta_3&&\\	
\mathbf{x}(0) &=&0,&0&0&0&0&0&0&0&0&0&0&0&0&\ldots\\	
\mathbf{x}(1)  &=&0,&\fbox{1}&0&1&0&0&0&0&0&0&0&1&0&\ldots\\	
\mathbf{x}(2)  &=&0,&0&\fbox{1}&0&0&0&0&0&0&0&1&0&0&\ldots\\	
\mathbf{x}(3)  &=&0,&0&1&\fbox{1}&0&0&0&0&0&0&1&1&0&\ldots\\	
\mathbf{x}(4)  &=&0,&1&0&0&\fbox{0}&0&0&0&0&1&0&0&0&\ldots\\	
\mathbf{x}(5)  &=&0,&1&0&1&0&\fbox{0}&0&0&0&1&0&1&0&\ldots\\	
\mathbf{x}(6)  &=&0,&1&1&0&0&0&\fbox{0}&0&0&1&1&0&0&\ldots\\	
\mathbf{x}(7)  &=&0,&1&1&1&0&0&0&\fbox{0}&0&1&1&1&0&\ldots\\	
\mathbf{x}(8)  &=&0,&0&0&0&0&0&0&0&\fbox{1}&0&0&0&0&\ldots\\	
\mathbf{x}(9)  &=&0,&0&0&0&0&0&0&0&1&\fbox{0}&0&1&0&\ldots\\	
\mathbf{x}(10)&=&0,&0&0&0&0&0&0&0&1&0&\fbox{1}&0&0&\ldots\\	
\mathbf{x}(11)&=&0,&0&0&0&0&0&0&0&1&0&1&\fbox{1}&0&\ldots\\	
\end{array}
\hspace{0.2cm}
\]

\[
\begin{array}{cccccccccccccccc}
\mathbf{x'}(0) &=&0,&0&0&0&0&0&0&0&0&0&0&0&0&\ldots\\	
\mathbf{x'}(1)  &=&&0,&0&1&0&0&0&0&0&0&0&1&0&\ldots\\	
\mathbf{x'}(2)  &=&&&0,&0&0&0&0&0&0&0&1&0&0&\ldots\\	
\mathbf{x'}(3)  &=&&&&0,&0&0&0&0&0&0&1&1&0&\ldots\\	
\mathbf{x'}(4)  &=&&&&&0,&0&0&0&0&1&0&0&0&\ldots\\	
\mathbf{x'}(5)  &=&&&&&&0,&0&0&0&1&0&1&0&\ldots\\	
\mathbf{x'}(6)  &=&&&&&&&0,&0&0&1&1&0&0&\ldots\\	
\mathbf{x'}(7)  &=&&&&&&&&0,&0&1&1&1&0&\ldots\\	
\mathbf{x'}(8)  &=&&&&&&&&&0,&0&0&0&0&\ldots\\	
\mathbf{x'}(9)  &=&&&&&&&&&&0,&0&1&0&\ldots\\	
\mathbf{x'}(10)&=&&&&&&&&&&&0,&0&0&\ldots\\	
\mathbf{x'}(11)&=&&&&&&&&&&&&0,&0&\ldots\\	
\end{array}
\]
}

Let us prove that when $0\leq n\leq \Delta_i$ the inequality $\{2^nn\alpha\}<\frac{1}{2}$
if and only if $\{2^nn\sum_{j=1}^{i}{\frac{1}{2^{\Delta_j}}}\}<\frac{1}{2}$. 
The implication from left to right follows from $\{2^nn\sum_{j=1}^{i}{\frac{1}{2^{\Delta_j}}}\}<\{2^nn\alpha\}$. 
Let us show that it also holds in other direction.
Assume that $\{2^nn\sum_{j=1}^{i}{\frac{1}{2^{\Delta_j}}}\}<\frac{1}{2}$
then  $\{2^nn\sum_{j=1}^{i}{\frac{1}{2^{\Delta_j}}}+x\}<\frac{1}{2}$ for any $0\leq x\leq \frac{1}{2^{\Delta_i+1-n}}$. 
In this case it is enough to show that  $2^nn\alpha-2^nn\sum_{j=1}^{i}{\frac{1}{2^{\Delta_j}}}<\frac{1}{2^{\Delta_i+1-n}}$ holds when $n\leq\Delta_i$.  
In fact we have  that 
$$2^nn\alpha-2^nn\sum_{j=1}^{i}{\frac{1}{2^{\Delta_j}}}=
2^nn\sum_{j=i+1}^{\infty}{\frac{1}{2^{\Delta_j}}}<2^nn\frac{1}{2^{\Delta_{i+1}-1}}=$$
$$=\frac{1}{2^{\Delta_{i+1}-n-\log_2(n)-1}}\leq\frac{1}{2^{\Delta_{i+1}-\Delta_i-\log_2(\Delta_i)-1}}=
=\frac{1}{2^{2^{\Delta_i}-\log_2(\Delta_i)-1}}.$$ 
Finally we have $\frac{1}{2^{2^{\Delta_i}-\log_2(\Delta_i)-1}}<\frac{1}{2^{\Delta_i+1-n}}$ when $n\leq\Delta_i$ and $i>1$.

Now for proving the theorem it is enough to show that for $0\leq n\leq\Delta_i$ the inequality
$\{2^nn\sum_{j=1}^{i}{\frac{1}{2^{\Delta_j}}}\}<\frac{1}{2}$ holds.
When $i=1$ the statement is trivial. Let us assume that it holds for $i=k-1$.
We show now that it holds for $i=k$, i.e. we show that for 
$\Delta_{k-1}\leq n\leq \Delta_{k}$  inequality  $\{2^nn\sum_{j=1}^{k}{\frac{1}{2^{\Delta_j}}}\}<\frac{1}{2}$
holds or taking into account  $\Delta_{k-1}\leq n$
also we have inequality $\{2^nn\frac{1}{2^{\Delta_k}}\}<\frac{1}{2}$. 

For $\Delta_{k-1}\leq n \leq \Delta_{k}-\Delta_{k-1}-1$ we get 
$2^nn\frac{1}{2^{\Delta_k}}\leq 2^{\Delta_{k}-\Delta_{k-1}-1}(\Delta_{k}-\Delta_{k-1}-1)\frac{1}{2^{\Delta_k}}$.
Since  $\Delta_k=2^{\Delta_{k-1}}+\Delta_{k-1}$, then
$2^{\Delta_{k}-\Delta_{k-1}-1}(\Delta_{k}-\Delta_{k-1}-1)\frac{1}{2^{\Delta_k}} =(2^{\Delta_{k-1}}-1)\frac{1}{2^{\Delta_{k-1}+1}}=\frac{1}{2}-\frac{1}{2^{\Delta_{k-1}+1}}<\frac{1}{2}$.

Consider the case when
 $\Delta_{k}-\Delta_{k-1}\leq n \leq \Delta_{k}$,  i.e. $2^{\Delta_{k-1}}\leq n \leq 2^{\Delta_{k-1}}+\Delta_{k-1}$
and define  $m=n-2^{\Delta_{k-1}}$.
We have that 
$0\leq m\leq\Delta_{k-1}$ and $2^nn\frac{1}{2^{\Delta_k}}=2^{m+2^{\Delta_{k-1}}}({m+2^{\Delta_{k-1}}})\frac{1}{2^{2^{\Delta_{k-1}}+\Delta_{k-1}}}=2^{m}({m+2^{\Delta_{k-1}}})\frac{1}{2^{\Delta_{k-1}}}=2^{m}m\frac{1}{2^{\Delta_{k-1}}}+2^m$. 
Therefore $\{2^nn\frac{1}{2^{\Delta_k}}\}=\{2^{m}m\frac{1}{2^{\Delta_{k-1}}}\}$, where $0\leq m\leq \Delta_{k-1}$. 
Now by the induction we have that $\{2^{m}m\frac{1}{2^{\Delta_{k-1}}}\}<\frac{1}{2}$. 
\end{proof}

While the question about the distributions for PAM orbits remains open we have unexpectedly shown that in a very similar system,
operating with irrational numbers, the uniform distribution of original orbits in maps may not remain uniform or even dense
when taking the fractional part after regular shifts. This makes the questions about PAMs 
even more ``mysterious'' as it is not clear whether such property may hold for a
sequence of points generated by PAMs, $\beta$-expansion and Mahler's problem.

\end{document}